\def\cG{\mathcal{G}}
\def\cO{\mathcal{O}}
\def\cT{\mathcal{T}}
\def\R{\mathbb{R}}
\def\w\eta{\widetilde{\eta}}
\newcommand{\bc}{\begin{center}}
\newcommand{\ec}{\end{center}}
\newcommand{\be}{\begin{eqnarray}}
\newcommand{\ee}{\end{eqnarray}}
\newcommand{\ben}{\begin{eqnarray*}}
\newcommand{\een}{\end{eqnarray*}}
\newtheorem{theorem}{Theorem}[section]
\newtheorem{lemma}{Lemma}[section]
\newtheorem{algorithm}{Algorithm}[section]
\begin{document}
\title{A Multigrid Method for the Ground State Solution of Bose-Einstein Condensates
Based on Newton Iteration\footnote{This work was
supported in part by National Science Foundations of China
(NSFC 91330202, 11371026, 11001259, 11031006, 2011CB309703), the National
Center for Mathematics and Interdisciplinary Science
the national Center for Mathematics and Interdisciplinary Science, CAS.}
}
\author{
Hehu Xie\footnote{LSEC, ICMSEC,
Academy of Mathematics and Systems Science, Chinese Academy of
Sciences, Beijing 100190, China (hhxie@lsec.cc.ac.cn)},\ \
Fei Xu\footnote{LSEC, ICMSEC,
Academy of Mathematics and Systems Science, Chinese Academy of
Sciences, Beijing 100190, China (xufei@lsec.cc.ac.cn)}  \ \ and\ \
Meiling Yue\footnote{LSEC, ICMSEC,
Academy of Mathematics and Systems Science, Chinese Academy of
Sciences, Beijing 100190, China (yuemeiling@lsec.cc.ac.cn)}
}
\date{}
\maketitle
\begin{abstract}
In this paper, a new kind of multigrid method is proposed for the ground state solution
 of Bose-Einstein condensates based on Newton iteration method.
Instead of treating eigenvalue $\lambda$ and eigenvector $u$ respectively,
we regard the eigenpair $(\lambda, u)$
as one element in the composite space $\R \times H_0^1(\Omega)$ and then
 Newton iteration method is adopted for the nonlinear problem.
Thus in this multigrid scheme, we only need to solve a linear discrete
boundary value problem in every refined space, which can improve the overall
efficiency for the simulation  of Bose-Einstein condensations.
\vskip0.3cm {\bf Keywords.} BEC, GPE, nonlinear eigenvalue problem, multigrid method,
 finite element method.
\vskip0.2cm {\bf AMS subject classifications.} 65N30, 65N25, 65L15, 65B99.
\end{abstract}

\section{Introduction}
A Bose-Einstein condensate (BEC) is a state of a dilute gas of bosons cooled
to temperature very close to absolute zero. Under such condition, a large fraction
 of bosons will occupy the lowest quantum state,
at which point, macroscopic quantum becomes apparent. BEC was first proposed by A.
Einstein who
generalized a work of S. N. Bose on the quantum statistics for photons \cite{Bose} to a
gas of non-interacting bosons \cite{Einstein1,Einstein2}.
Then Gross-Pitaevskii theory was developed by  Gross \cite{G} and Pitaevskii \cite{P}
independently in 1960s to describe the dynamics of a BEC \cite{PitaevskiiStringari}.
Since the first experimental
observation of BEC in 1995, much attention has been paid to the Gross-Pitaevskii equation (GPE).

In the past decades, there have existed many papers discussing the numerical methods for the
time-dependent GPEs and time-independent GPEs. Please refer to \cite{Adhikari,AdhikariMuruganandam,AnglinKetterle,BaoCai,BaoDu}
and the papers cited therein. Especially, in \cite{CancesChakirMaday,ZhouBEC}, the convergence
and the priori error estimates of the finite element method for GPEs have been proved, which will be used later in this paper.

Solving such kind of nonlinear eigenvalue problem is an important but difficult problem in science and engineering computation.
As is known to us all, the multigrid method provides an optimal
complexity algorithm to solve discrete boundary value problems.
The aim of this paper is to propose a multigrid scheme for GPEs based on Newton iteration method. More precisely, GPE is regarded as a
nonlinear problem in the composite space $\R\times H_0^1(\Omega)$ and then Newton iteration is adopted to derive a linearized
boundary value problem. Thus, we just need to solve a linear problem with finite element method in every refined space.
With this multigrid scheme, solving GPE problem will not be more difficult than solving the corresponding boundary value problem.
Besides, the convergence rate and computational work of this method are also analyzed in this paper.

An outline of the paper goes as follows. In Section 2, we introduce the finite element
method and corresponding convergence estimates for the ground state solution of BEC, i.e.
non-dimensionalized GPE.
A Newton iteration method for GPE is presented in Section 3.
In Section 4, we propose a type of multigrid algorithm for
GPE based on Newton iteration method. Section 5 is devoted to
estimating the computational work of the multigrid method designed in Section 4.
Two numerical examples are presented in Section 6 to validate the theoretical analysis.
Finally, some concluding remarks are given in the last section.

\section{Finite element method for  Gross-Pitaevskii equation}
This section is devoted to introducing some notation and the finite element method for GPE problem.
The letter $C$ (with or without subscripts) denotes a generic
positive constant which may be different at its different occurrences.
For convenience, the symbols $\lesssim$, $\gtrsim$ and $\approx$
will be used in this paper to denote $x_1\leq C_1y_1$, $x_2 \geq c_2y_2$
and $c_3x_3\leq y_3\leq C_3x_3$ for some constants $C_1, c_2, c_3$, $C_3$
that are independent of mesh sizes (see, e.g., \cite{Xu}).
We shall use the standard notation for Sobolev spaces $W^{s,p}(\Omega)$ and their
associated norms $\|\cdot\|_{s,p,\Omega}$ and seminorms $|\cdot|_{s,p,\Omega}$
(see, e.g., \cite{Adams}). For $p=2$, we denote
$H^s(\Omega)=W^{s,2}(\Omega)$, $H_0^1(\Omega)=\{v\in H^1(\Omega):\ v|_{\partial\Omega}=0\}$,
where $v|_{\partial\Omega}=0$ is in the sense of trace and
$\|\cdot\|_{s,\Omega}=\|\cdot\|_{s,2,\Omega}$. In this paper, we set $V=H_0^1(\Omega)$
and use $\|\cdot\|_s$ to denote $\|\cdot\|_{s,\Omega}$ for simplicity.

It is known that the wave function $\psi$ of a sufficiently dilute condensate,
in the presence of an external potential $\widetilde{W,}$  satisfies the following GPE
\begin{eqnarray}\label{GPE}
\left(-\frac{\hbar^2}{2m}\Delta + \widetilde{W} + \frac{4\pi\hbar^2aN}{m}|\psi|^2\right)\psi &=&\mu\psi,
\end{eqnarray}
where $\mu$ is the chemical potential and $N$ is the number of atoms
in the condensate, $4\pi\hbar^2a/m$ represents the effective two-body interaction, $\hbar$ is the
Plank constant, $a$ is the scattering length (positive for repulsive interaction and negative for
attractive interaction) and $m$ is the particle mass. In this paper, we assume the external potential
 $\widetilde{W}(x)$ is measurable, locally bounded and tends to infinity as $|x|\rightarrow\infty$
in the sense that
\begin{eqnarray*}
\inf_{|x|\geq r}\widetilde{W}(x)\rightarrow \infty\ \ \ \ {\rm as}\ r\rightarrow \infty.
\end{eqnarray*}
Then the wave function $\psi$ must vanish exponentially fast as $|x|\rightarrow \infty$.
Furthermore, (\ref{GPE}) can be written as
\begin{eqnarray}\label{GPE_Simple}
\left(-\Delta +\frac{2m}{\hbar^2}\widetilde{W}+8\pi aN|\psi|^2\right)\psi&=&\frac{2m\mu}{\hbar^2}\psi.
\end{eqnarray}
Hence in this paper, we are concerned with the  smallest eigenpair for
the following non-dimensionalized GPE problem:
\begin{equation}\label{GPEsymply2}
\left\{
\begin{array}{rcl}
-{\Delta} u + Wu + \zeta |u|^2u &=& \lambda u,\ \ \  {\rm in}\  \Omega,\\
u &=& 0,\ \ \ \ \  {\rm on}\ \partial \Omega,\\
\int_\Omega|u|^2{d\Omega} &=& 1,
\end{array}
\right.
\end{equation}
where $\Omega \subset \mathbb{R}^d$  $( d = 1, 2, 3)$ denotes the computing
domain which has the cone property \cite{Adams}, $\zeta$ is some positive constant and
$W(x) = \gamma_1 x^2_1 +\cdots +\gamma_d x^2_d \geq 0$
with  $\gamma_1,\cdots , \gamma_d > 0$ \cite{BaoTang,ZhouBEC}.

For the aim of finite element
discretization, the corresponding weak form for (\ref{GPEsymply2}) can be described as follows:
Find $(\lambda,u)\in \R\times V$ such that $b(u,u) = 1$ and
\begin{equation}\label{GPEweakform}
a(u,v) = \lambda b(u,v),\ \ \  \forall v \in V,
\end{equation}
where
\[
a(u,v) = \int_\Omega\big( \nabla u\cdot\nabla v+Wuv+\zeta |u|^2uv\big)d\Omega,
\ \ \ \ b(u,v) = \int_\Omega uvd\Omega.
\]
We also introduce the linearized form $a'(u;v,w)$ by
\begin{eqnarray}
a'(u;v,w) = \int_\Omega\big(\nabla v\cdot\nabla w + Wvw
+ 3\zeta|u|^2vw\big)d\Omega, \quad \forall v,w\in V.
\end{eqnarray}
Here and hereafter in this paper, we only consider the smallest eigenvalue and
the corresponding eigenfunction of the problem (\ref{GPEweakform}).
For GPE problem, we can find the following estimates from \cite{CancesChakirMaday}.
\begin{lemma}
There exist positive constants $M,\  C_L$ and $C_U$ such that for all $v\in H_0^1(\Omega)$,
\begin{eqnarray}
0\leq ( \nabla v,\nabla v)+(Wv+\zeta |u|^2v,v)-\lambda(v,v) \leq M\|v\|_1^2
\end{eqnarray}
and
\begin{eqnarray}\label{coercive a'}
C_L \|v\|_1^2\leq a'(u;v,v)-\lambda(v,v) \leq C_U\|v\|_1^2.
\end{eqnarray}
\end{lemma}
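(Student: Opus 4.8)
The plan is to read both estimates as statements about two linear Schr\"odinger operators whose ground state is $u$ itself. The decisive structural fact is that the ground state $u$ of (\ref{GPEweakform}) may be taken strictly positive on $\Omega$ (if $u$ minimizes the associated energy so does $|u|$, and the strong maximum principle then forces $|u|>0$), and that rewriting the GPE as $(-\Delta+W+\zeta|u|^2)u=\lambda u$ exhibits $u$ as a positive eigenfunction of the linear operator $L:=-\Delta+W+\zeta|u|^2$. Since on the bounded domain $\Omega$ the potential $W$ is bounded and $|u|^2\in L^\infty$ (or at least lies in $L^3$ by elliptic regularity and the Sobolev embedding for $d\le 3$), $L$ has compact resolvent and discrete spectrum; a positive eigenfunction of such an operator is necessarily a ground state, so $\lambda$ is the smallest eigenvalue of $L$. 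By the Rayleigh-quotient characterization this gives $(\nabla v,\nabla v)+(Wv+\zeta|u|^2v,v)\ge\lambda(v,v)$ for every $v\in V$, which is exactly the left inequality of the first estimate.

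For the two upper bounds I would only use continuity. On the bounded domain $0\le W\le W_{\max}$, and the embedding $H_0^1(\Omega)\hookrightarrow L^4(\Omega)$ (available for $d\le 3$) yields $\int_\Omega|u|^2v^2\,d\Omega\le\|u\|_{L^4}^2\|v\|_{L^4}^2\lesssim\|u\|_1^2\|v\|_1^2$ with $u$ fixed. Hence $(\nabla v,\nabla v)+(Wv+\zeta|u|^2v,v)-\lambda(v,v)\le M\|v\|_1^2$, proving the right inequality of the first estimate. Writing $a'(u;v,v)=(\nabla v,\nabla v)+(Wv+\zeta|u|^2v,v)+2\zeta\int_\Omega|u|^2v^2\,d\Omega$ and bounding the extra term by the same Sobolev estimate immediately upgrades this to the upper bound $a'(u;v,v)-\lambda(v,v)\le C_U\|v\|_1^2$ of (\ref{coercive a'}).

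The real work is the coercivity, i.e. the left inequality of (\ref{coercive a'}), which I would prove in two stages. First, $L^2$-coercivity: let $\mu_1'$ be the smallest eigenvalue of $L':=-\Delta+W+3\zeta|u|^2=L+2\zeta|u|^2$. Using $(\nabla v,\nabla v)+(Wv+\zeta|u|^2v,v)\ge\lambda(v,v)$ together with $2\zeta\int_\Omega|u|^2v^2\,d\Omega\ge 0$ gives $a'(u;v,v)\ge\lambda(v,v)$, hence $\mu_1'\ge\lambda$. If equality held, the Rayleigh minimizer $v_1\ne 0$ of $L'$ would force both the bracket to equal $\lambda(v_1,v_1)$ and $\int_\Omega|u|^2v_1^2\,d\Omega=0$; since $u>0$ a.e., the second of these forces $v_1=0$, a contradiction. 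Therefore $\delta:=\mu_1'-\lambda>0$ and $a'(u;v,v)-\lambda(v,v)\ge\delta(v,v)$. Second, I would upgrade this $L^2$ bound to the $H^1$ norm by a G\aa rding-type combination: since $W,3\zeta|u|^2\ge 0$ we also have $a'(u;v,v)\ge(\nabla v,\nabla v)$, and convex-combining the two lower bounds with a small weight $\alpha\in(0,1)$ chosen so that $(1-\alpha)\delta-\alpha\lambda>0$ yields $a'(u;v,v)-\lambda(v,v)\ge C_L\|v\|_1^2$ with $C_L>0$.

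The main obstacle is this last coercivity step, and within it the strict spectral gap $\mu_1'>\lambda$: it is here that the positivity of the ground state $u$ is indispensable, since it is precisely $u>0$ that prevents the Rayleigh minimizer of $L'$ from collapsing onto the zero set of $|u|^2$. Once the strict $L^2$-coercivity is secured, the passage to the full $H^1$ estimate is routine, and the upper bounds are mere continuity statements.
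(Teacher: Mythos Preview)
The paper does not actually prove this lemma: it is stated immediately after the sentence ``For GPE problem, we can find the following estimates from \cite{CancesChakirMaday}'' and is simply quoted from that reference, with no argument given. So there is no in-paper proof to compare against; your write-up supplies one where the paper defers to the literature.

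That said, your argument is correct and follows exactly the line of reasoning one finds in the cited source. The key insight---that the positive ground state $u$ of the nonlinear problem is automatically the ground state of the \emph{linear} operator $-\Delta+W+\zeta|u|^2$, which yields the nonnegativity in the first estimate and then, via the strict positivity $u>0$ a.e., forces the spectral gap $\mu_1'>\lambda$ for the linearized operator $-\Delta+W+3\zeta|u|^2$---is precisely the mechanism used by Canc\`{e}s, Chakir and Maday. Your G\aa rding-type upgrade from the $L^2$ bound $a'(u;v,v)-\lambda(v,v)\ge\delta\|v\|_0^2$ to the $H^1$ coercivity, by convex-combining with $a'(u;v,v)\ge\|\nabla v\|_0^2$, is also standard and correct. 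The upper bounds are routine continuity estimates as you say. In short: your proof is sound and is essentially the argument the paper is importing by citation; you have just made it explicit.
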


Now, let us define the finite element method \cite{BrennerScott,Ciarlet}
for the problem (\ref{GPEweakform}). First we
generate a shape-regular decomposition of the computing domain
$\Omega \subset \mathbb{R}^d$ and the diameter of a cell $K \in \mathcal{T}_h$ is
denoted by $h_K$. The mesh diameter $h$ describes the maximum diameter of all cells
$K \in \mathcal{T}_h$. Based on the mesh $\mathcal{T}_h$, we construct the
linear finite element space denoted by $V_h\subset V$.
We assume that $V_h\subset V$ satisfies the following assumption:
\begin{equation}
\lim_{h\rightarrow 0}\inf_{v \in V_h} \|w - v\|_1 = 0,\ \ \ \forall w\in V.
\end{equation}

The standard finite element method for (\ref{GPEweakform}) is to solve the following eigenvalue problem:
Find $(\lambda_h,u_h)\in \R\times V_h$ such that $b(u_h,u_h) = 1$ and
\begin{equation}\label{GPEfem}
a(u_h,v_h) = \lambda_h b(u_h,v_h),\ \ \   \forall v_h \in V_h.
\end{equation}
Then we define
\begin{equation}\label{delta}
\delta_h(u) := \inf_{v_h\in V_h}\|u - v_h\|_1.
\end{equation}


The convergence estimates of the finite element method for (\ref{GPEweakform}) are
presented in the following lemma which can be found in \cite{CancesChakirMaday,ZhouBEC}.
\begin{lemma}\label{lemma:Maday}
(\cite[Theorem 1]{CancesChakirMaday})
There exists $h_0 > 0$, such that for all $0 < h < h_0$, the smallest eigenpair approximation
 $(\lambda_h,u_h)$ of (\ref{GPEfem}) has the following error estimates
\begin{eqnarray}
\|u - u_h\|_1 &\lesssim& \delta_h(u),\\
\|u - u_h\|_0 &\lesssim& \eta_a(V_h)\|u -u_h\|_1 \lesssim \eta_a(V_h)\delta_h(u),\\
|\lambda - \lambda_h| &\lesssim& \|u - u_h\|^2_1
+ \|u - u_h\|_0\lesssim \eta_a(V_h)\delta_h(u),
\end{eqnarray}
where $\eta_a(V_h)$ is defined as follows
\begin{eqnarray}\label{eta_a_h}
\eta_a(V_h)=\|u-u_h\|_1+ \sup_{f\in L^2(\Omega),\|f\|_0=1}\inf_{v_h\in V_h}\|Tf-v_h\|_1
\end{eqnarray}
with the operator $T$ being defined as follows:
 Find $Tf\in u^{\perp}$ such that
\vskip-0.7cm
\begin{eqnarray*}
a'(u;Tf,v)- (\lambda(Tf),v)=(f,v),\ \ \ \ \forall v\in u^{\perp},
\end{eqnarray*}
and $u^{\perp}=\big\{v\in H_0^1(\Omega) | \int_{\Omega}uvd\Omega=0\big\}$.
\end{lemma}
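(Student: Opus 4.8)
The statement is the standard a priori finite element analysis for the Gross-Pitaevskii ground state, and the plan is to regard $(\lambda,u)$ and $(\lambda_h,u_h)$ as the constrained critical points of the energy $E(v)=\frac12\int_\Omega(|\nabla v|^2+W v^2)d\Omega+\frac{\zeta}{4}\int_\Omega v^4 d\Omega$ on the $L^2$ unit sphere, and to proceed in three stages: the $H^1$ estimate, an $L^2$ duality (Aubin-Nitsche) estimate, and the eigenvalue estimate. The engine throughout is inequality (\ref{coercive a'}) of Lemma 2.1, which states that the linearized form $a'(u;\cdot,\cdot)-\lambda(\cdot,\cdot)$ is coercive on all of $V$; this is exactly the nondegeneracy of the Hessian of the energy at the ground state. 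A preliminary step is to use this coercivity to show that $u$ is a strict, isolated minimizer, so that for $h<h_0$ the discrete ground state $u_h$ is well defined, unique in a neighbourhood of $u$, and converges to $u$; this rules out convergence to a competing critical point and legitimizes linearizing around $u$.

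For the $H^1$ estimate I would subtract (\ref{GPEfem}) from (\ref{GPEweakform}) and write the difference using a first-order Taylor expansion of the cubic term $\zeta|u|^2u$ about $u$, so that, with $e=u-u_h$, the principal part of the residual equation is governed by the coercive form $a'(u;\cdot,\cdot)-\lambda(\cdot,\cdot)$ and the remainder is quadratic or cubic in $e$. Testing the residual against $v_h-u_h$ for an arbitrary $v_h\in V_h$, applying the coercivity (\ref{coercive a'}) from below and the continuity constant $M$ of Lemma 2.1 from above, and absorbing the higher-order remainder into the left-hand side using the smallness of $\|e\|_1$ guaranteed by the isolation step, yields the quasi-optimal bound $\|e\|_1\lesssim \inf_{v_h\in V_h}\|u-v_h\|_1=\delta_h(u)$.

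For the $L^2$ estimate I would run the Aubin-Nitsche duality argument built on the operator $T$ introduced in the statement. Taking $f=e/\|e\|_0$ and solving the linearized dual problem $a'(u;Tf,v)-\lambda(Tf,v)=(f,v)$ on $u^\perp$ (the component of $e$ along $u$ being only $O(\|e\|_0^2)$ by the normalizations, hence negligible), I would test with $v=e$, insert the Galerkin orthogonality of $e$ up to the higher-order remainder, and subtract an arbitrary interpolant of $Tf$ in $V_h$. The usual manipulation bounds $\|e\|_0$ by $\|e\|_1$ times the best approximation error of $Tf$ in $V_h$, whose supremum over $f$ is one of the two terms defining $\eta_a(V_h)$ in (\ref{eta_a_h}); thus $\|e\|_0\lesssim \eta_a(V_h)\|e\|_1$, and the $H^1$ estimate upgrades this to $\|e\|_0\lesssim\eta_a(V_h)\delta_h(u)$.

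Finally, since $b(u,u)=b(u_h,u_h)=1$ give $\lambda=a(u,u)$ and $\lambda_h=a(u_h,u_h)$, I would expand $\lambda-\lambda_h=a(u,u)-a(u_h,u_h)$ in powers of $e$. All quadratic and higher terms are $O(\|e\|_1^2)$; the first-order terms combine with the Euler-Lagrange identity $a(u,e)=\lambda(u,e)$ so that the gradient and potential contributions cancel, but the quartic nonlinearity leaves a residual $2\zeta\int_\Omega u^3 e\, d\Omega$ which is only $O(\|e\|_0)$ because $u$ is bounded, while the surviving inner product $\lambda(u,e)=\frac{\lambda}{2}\|e\|_0^2$ is quadratically small. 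This produces $|\lambda-\lambda_h|\lesssim\|e\|_1^2+\|e\|_0$, which the two previous estimates sharpen to $\eta_a(V_h)\delta_h(u)$. The main obstacle throughout is the interplay of the nonlinearity with the spherical constraint: every expansion must be carried out on the unit sphere, the direction $u$ must be separated from the tangential part, and one must confirm that the cubic remainders produced by $\zeta|u|^2u$ are genuinely higher order — which is precisely where the isolation of the ground state and the restriction $h<h_0$ are indispensable.
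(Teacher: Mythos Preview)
The paper does not supply a proof of this lemma at all: it is quoted verbatim as Theorem~1 of Canc\`es--Chakir--Maday \cite{CancesChakirMaday} (see the sentence preceding the lemma and its attribution), so there is no in-paper argument to compare against. Your three-stage outline---$H^1$ quasi-optimality from the coercivity \eqref{coercive a'} of the linearized form, an Aubin--Nitsche duality on $u^\perp$ via the operator $T$ for the $L^2$ bound, and the expansion of $a(u,u)-a(u_h,u_h)$ isolating the nonlinear residual $2\zeta\int_\Omega u^3 e\,d\Omega$ that accounts for the extra $\|e\|_0$ term in the eigenvalue estimate---is exactly the strategy of the cited reference and is correct as a sketch.
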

\section{Newton iteration method for Gross-Pitaevskii equation}
In this section, Newton iteration method is introduced to solve the GPE
problem in a composite space defined as follows:

Denote the space $\R\times H_0^1(\Omega)$ by $X$ and $\R\times H^{-1}(\Omega)$
by $X^*$ with the norm
\begin{eqnarray*}
\|(\gamma,w)\|_X=|\gamma |+\|w\|_1 \quad
\text{and} \quad
\|(\gamma,w)\|_0=|\gamma |+\|w\|_0, \quad \forall  (\gamma,w)\in X.
\end{eqnarray*}
And the corresponding finite element space $\R\times V_h$ is denoted by $X_h$.

For any $(\gamma,w),\ (\mu,v)\in X$, we define a nonlinear operator $\cG: X \rightarrow X^*$ as follows
\begin{eqnarray}\label{definition of g}
 \langle\cG(\gamma,w),(\mu,v)\rangle &=& ( \nabla w,\nabla v)+(Ww+\zeta |w|^2w-\gamma w,v)\nonumber\\
 &&\ \ \  +\frac{1}{2}\mu\left(1-\int_{\Omega}w^2d\Omega\right).
\end{eqnarray}
Since we request $\|u\|_0^2=1$, (\ref{GPEweakform}) can be rewritten as: Find $(\lambda,u)\in X$ such that
\begin{eqnarray}\label{rewrittem g}
\langle\cG(\lambda,u),(\mu,v)\rangle=0, \quad \forall (\mu,v) \in X.
\end{eqnarray}
The Fr\'{e}chet derivation of $\cG$ at $(\lambda,u)$ is given by
\begin{eqnarray}\label{definition of g'}
\langle\cG'(\lambda,u)(\gamma,w),(\mu,v)\rangle&=&(\nabla w,\nabla v)+((W+3\zeta u^2-\lambda)w,v)\nonumber\\
&&\ \ \ -\gamma(u,v)-\mu(u,w)\nonumber\\
&=&a'(u;w,v)-\lambda (w,v)-\gamma(u,v)-\mu(u,w).
\end{eqnarray}
Assume we have an initial eigenpair approximation $(\lambda^{'},u^{'})$ on the finite element space $X_h$,
Newton iteration method for GPE
is defined as follows to get a better eigenpair approximation $(\lambda^{''},u^{''})\in X_h$:
\begin{eqnarray}\label{Newton Iteration}
\langle \cG'(\lambda^{'},u^{'})(\lambda^{''}-\lambda^{'},u^{''}-u^{'}),(\mu,v)\rangle
=-\langle\cG(\lambda^{'},u^{'}),(\mu,v)\rangle, \quad \forall (\mu,v)\in X_h.
\end{eqnarray}
From (\ref{definition of g}) and (\ref{definition of g'}), (\ref{Newton Iteration})
can be rewritten as follows: Find$(\lambda^{''},u^{''})\in X_h$, such that
\begin{eqnarray}\label{newton step}
\begin{cases}
a(u^{'};u^{''},v)-\lambda^{''}(u^{'},v)
=(2\zeta (u^{'})^3-\lambda^{'}u^{'},v),\quad \forall v\in V_h,\\
-\mu (u^{'},u^{''})=-\mu/2-\mu(u^{'},u^{'})/2,\qquad\qquad\quad\ \ \forall \mu\in \R
\end{cases}
\end{eqnarray}
with $a(u^{'};u^{''},v)=a'(u^{'};u^{''},v)-\lambda^{'}(u^{''},v)$.

The isomorphism property of $\cG'$ is analyzed in the following theorem.

\begin{theorem}
If the mesh size $h$ is sufficiently small, then for the linearized operator
 $\cG'$ presented in (\ref{definition of g'}), we have
\begin{eqnarray}\label{coercive of g1}
\|(\gamma,w)\|_X \lesssim \mathop {\sup}
\limits_{(\mu,v)\in X}\dfrac{\langle\cG'(\lambda,u)(\gamma,w),(\mu,v)\rangle}{\|(\mu,v)\|_X},
\quad \forall (\gamma,w)\in X
\end{eqnarray}
and
\begin{eqnarray}\label{coercive of g2}
  \|(\gamma,w)\|_X \lesssim \mathop {\sup}
  \limits_{(\mu,v)\in X_h}\dfrac{\langle\cG'(\lambda,u)(\gamma,w),(\mu,v)\rangle}{\|(\mu,v)\|_X},
  \quad \forall (\gamma,w)\in X_h.
\end{eqnarray}
For any $(\hat{\lambda},\hat{u})\in X$ such that
$\|(\hat{\lambda}-\lambda,\hat{u}-u)\|_X$ is small enough, there holds
\begin{eqnarray}\label{coercive of g3}
  \|(\gamma,w)\|_X \lesssim \mathop {\sup} \limits_{(\mu,v)\in X_h}\dfrac{\langle\cG'(\hat{\lambda},\hat{u})(\gamma,w),(\mu,v)\rangle}{\|(\mu,v)\|_X},
  \quad \forall (\gamma,w)\in X_h.
\end{eqnarray}
\end{theorem}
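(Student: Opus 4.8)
The three displays are Babuška--Nečas (inf--sup) conditions for the linearized saddle--point operator $\cG'$, whose bilinear form splits as $\langle\cG'(\lambda,u)(\gamma,w),(\mu,v)\rangle=\widetilde a(w,v)-\gamma(u,v)-\mu(u,w)$, where $\widetilde a(w,v):=a'(u;w,v)-\lambda(w,v)$ is coercive on all of $V$ by (\ref{coercive a'}). My plan is to prove (\ref{coercive of g1}) and (\ref{coercive of g2}) directly, by feeding $\cG'$ a short list of explicit test functions whose combination controls $|\gamma|$ and $\|w\|_1$ separately, and then to deduce (\ref{coercive of g3}) from (\ref{coercive of g2}) by perturbation. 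A preliminary identity I would record first is $\widetilde a(w,u)=2\zeta(u^3,w)$: testing the weak form (\ref{GPEweakform}) against $w$ gives $(\nabla u,\nabla w)+(Wu,w)+\zeta(u^3,w)=\lambda(u,w)$, and substituting this into $\widetilde a(w,u)=(\nabla u,\nabla w)+((W+3\zeta u^2)u,w)-\lambda(u,w)$ leaves exactly $2\zeta(u^3,w)$. This is where the factor $3$ in $a'$ pays off, and it is why (\ref{coercive a'}) can hold globally rather than only on $u^{\perp}$.

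For the continuous estimate let $S$ denote the supremum on the right of (\ref{coercive of g1}). I would use three tests. First, $(\mu,v)=(1,0)$ gives $\langle\cG'(\lambda,u)(\gamma,w),(1,0)\rangle=-(u,w)$, hence $|(u,w)|\le S$. Second, $(\mu,v)=(0,u)$ together with the identity above gives $\gamma=2\zeta(u^3,w)-\langle\cG'(\lambda,u)(\gamma,w),(0,u)\rangle$, whence $|\gamma|\lesssim\|w\|_1+S$, the term $(u^3,w)$ being bounded via $H_0^1(\Omega)\hookrightarrow L^6(\Omega)$ for $d\le 3$. Third, $(\mu,v)=(0,w)$ with coercivity (\ref{coercive a'}) yields $C_L\|w\|_1^2\le\langle\cG'(\lambda,u)(\gamma,w),(0,w)\rangle+\gamma(u,w)\le S\|w\|_1+|\gamma|\,|(u,w)|$. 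Inserting the first two bounds produces the quadratic inequality $C_L\|w\|_1^2\lesssim S\|w\|_1+S^2$, which forces $\|w\|_1\lesssim S$ and then $|\gamma|\lesssim S$; adding the two gives (\ref{coercive of g1}). The crucial decoupling is that $|(u,w)|$ is estimated by $S$ rather than by $\|w\|_1$.

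For the discrete estimate (\ref{coercive of g2}) I would run the identical three tests inside $X_h$. Since $w\in V_h$, the tests $(1,0)$ and $(0,w)$ already lie in $X_h$, and the coercivity (\ref{coercive a'}) restricts to $V_h\subset V$, so the first and third steps carry over verbatim. The only obstruction is the second step, which used $v=u\notin V_h$. I would replace it by $v=u_I\in V_h$ with $\|u-u_I\|_1\le\delta_h(u)$ from (\ref{delta}) (equivalently $u_I=u_h$ via Lemma \ref{lemma:Maday}); since $\delta_h(u)\to 0$, one has $(u,u_I)=1-(u,u-u_I)\ge 1/2$ for small $h$, and $\widetilde a(w,u_I)=2\zeta(u^3,w)+\widetilde a(w,u_I-u)$ with the remainder $\lesssim\delta_h(u)\|w\|_1$. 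This again gives $|\gamma|\lesssim\|w\|_1+S_h$, and the same quadratic argument closes (\ref{coercive of g2}). I expect this discrete step to be the main obstacle: one must ensure that approximating $u$ by $u_I$ degrades neither the lower bound $(u,u_I)\gtrsim 1$ nor the identity for $\widetilde a(w,u)$ beyond an $O(\delta_h(u))$ error, and it is precisely here that the hypothesis ``$h$ sufficiently small'' is consumed.

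Finally, for (\ref{coercive of g3}) I would write $\cG'(\hat\lambda,\hat u)=\cG'(\lambda,u)+\cE$, where $\cE$ collects $3\zeta((\hat u^2-u^2)w,v)$, $-(\hat\lambda-\lambda)(w,v)$, $-\gamma(\hat u-u,v)$ and $-\mu(\hat u-u,w)$. Using Hölder's inequality and $H_0^1\hookrightarrow L^6$ (so that $|((\hat u^2-u^2)w,v)|\lesssim\|\hat u-u\|_1\|w\|_1\|v\|_1$ with a constant depending on $\|u\|_1+\|\hat u\|_1$), one bounds $|\langle\cE(\gamma,w),(\mu,v)\rangle|\lesssim\|(\hat\lambda-\lambda,\hat u-u)\|_X\,\|(\gamma,w)\|_X\,\|(\mu,v)\|_X$. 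Subtracting this from the lower bound (\ref{coercive of g2}) and taking $\|(\hat\lambda-\lambda,\hat u-u)\|_X$ small enough preserves the inf--sup constant, which gives (\ref{coercive of g3}). The only technical care needed here is keeping the nonlinear estimates uniform, which rests on the embedding in dimension $d\le 3$ and on the uniform $H_0^1$ bound of $u$ and $\hat u$.
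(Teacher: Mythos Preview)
Your argument is correct; all three displays follow from your three explicit test functions and the perturbation step, and the quadratic inequality closes cleanly because coercivity (\ref{coercive a'}) is stated on all of $V$, not merely on $u^{\perp}$.

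The route, however, differs from the paper's. For (\ref{coercive of g1}) the authors recognise the system as a saddle--point problem with primal form $\widetilde a(\cdot,\cdot)$ and constraint form $b_u(\mu,v)=-\mu(u,v)$, and invoke Brezzi's theory: coercivity of $\widetilde a$ on the kernel $V_0=u^\perp$ plus the inf--sup of $b_u$ (tested with $v=-\mu u$) gives the isomorphism. For (\ref{coercive of g2}) they do not redo the argument inside $X_h$ as you do; instead they define a Galerkin projection $P_h:V\to V_h$ with respect to $\widetilde a$, use the Aubin--Nitsche estimate $\|v-P_hv\|_0\lesssim\eta_a(V_h)\|v\|_1$, and show that replacing $(\mu,v)$ by $(\mu,P_hv)$ in the continuous supremum costs only the stray term $\gamma(u,v-P_hv)$, which is $O(\eta_a(V_h))\|(\gamma,w)\|_X$ and can be absorbed. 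For (\ref{coercive of g3}) the two proofs coincide. Your approach is more elementary---it needs neither mixed--method theory nor duality, only $\delta_h(u)\to0$---and exploits the pleasant identity $\widetilde a(w,u)=2\zeta(u^3,w)$ to handle the off--kernel component directly; the paper's approach is more structural and would transplant unchanged to other constrained eigenproblems where such an identity is unavailable.
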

\begin{proof}
For the first estimate (\ref{coercive of g1}), we just need to prove that the equation
\begin{eqnarray}\label{linear equation}
\cG'(\lambda,u)(\gamma,w)=(\tau,f)
\end{eqnarray}
is uniquely solvable in $X$ for any $(\tau,f)\in X^*$.
From (\ref{definition of g'}), we obtain that (\ref{linear equation}) can be rewritten as
\begin{eqnarray*}
\begin{cases}
a'(u;w,v)-\lambda(w,v)+b_u(\gamma,v)=(f,v),   \quad \forall v\in V,\\
b_u(\mu,w)=\mu\tau,  \qquad \qquad \qquad \qquad\qquad ~~~ \quad\forall \mu \in \R,
\end{cases}
\end{eqnarray*}
where $b_u(\mu,v)=-\mu(u,v)$.

For this saddle problem, the solvable condition is (Theorem 1.1 in \cite{Fortin},
 \uppercase\expandafter{\romannumeral2}):

Firstly, the following variational problem
\begin{eqnarray}\label{unique of a}
a'(u;w,v)-\lambda (w,v)=(f,v), \quad \forall v\in V_0
\end{eqnarray}
is uniquely solvable for any $f\in H^{-1}(\Omega)$ and $V_0:=\{v\in V: b_u(\mu,v)=0, \forall \mu \in \R\}$.

Secondly, $b_u(\cdot,\cdot)$ satisfies the inf-sup condition
\begin{eqnarray}\label{infsup}
\inf_{\mu\in \R}\sup_{v\in V}\dfrac{b_u(\mu,v)}{\|v\|_1|\mu|}\geq k_b
\end{eqnarray}
for some constant $k_b>0$.

The well-poseness of (\ref{unique of a}) can be derived from (\ref{coercive a'}) directly.

For the second condition (\ref{infsup}), take $v=-\mu u$. Since $\|u\|_0=1$, there holds
\begin{eqnarray*}
\inf_{\mu\in \R}\sup_{v\in V}\dfrac{b_u(\mu,v)}{\|v\|_1|\mu|} \geq
\inf_{\mu\in \R}\dfrac{\mu^2(u,u)}{\|u\|_1|\mu|^2}=\frac{1}{\|u\|_1}=: k_b.
\end{eqnarray*}
This completes the proof of (\ref{coercive of g1}).

From (\ref{coercive a'}), we can define a project operator $P_h:V \rightarrow V_h$ by
\begin{eqnarray}
a'(u;w,v-P_hv)-\lambda(w,v-P_hv)=0, \quad \forall w\in V_h,\ v\in V.
\end{eqnarray}
There apparently holds
\begin{eqnarray}\label{projection}
\|P_hv\|_1 \lesssim \|v\|_1, \quad\forall v \in V.
\end{eqnarray}
From the Aubin-Nitsche lemma, we have
\begin{eqnarray}\label{L2_Estimate}
\|v-P_hv\|_0\lesssim \eta_a(V_h)\|v\|_1, \quad\forall v \in V.
\end{eqnarray}
So for any $(\gamma,w)\in X_h$, from (\ref{L2_Estimate}), the following estimates hold
\begin{eqnarray}\label{isomorphism}
 \|(\gamma,w)\|_X &\lesssim&  \mathop {\sup} \limits_{(\mu,v)\in X}\dfrac{\langle\cG'(\lambda,u)(\gamma,w),(\mu,v)\rangle}{\|(\mu,v)\|_X}\nonumber\\
 &=&\mathop {\sup} \limits_{(\mu,v)\in X}\dfrac{\langle\cG'(\lambda,u)(\gamma,w),(\mu,P_hv)\rangle
 +\langle\cG'(\lambda,u)(\gamma,w),(0,v-P_hv)\rangle}{\|(\mu,v)\|_X}\nonumber\\
 &=&\mathop {\sup} \limits_{(\mu,v)\in X}\dfrac{\langle\cG'(\lambda,u)(\gamma,w),(\mu,P_hv)\rangle
 -\gamma(u,v-P_hv)}{\|(\mu,v)\|_X}\nonumber\\
 &\lesssim&\mathop {\sup} \limits_{(\mu,v)\in X}\dfrac{\langle\cG'(\lambda,u)(\gamma,w),(\mu,P_hv)\rangle+\gamma \|u\|_0\|v-P_hv\|_0}{\|(\mu,v)\|_X}\nonumber\\
 &\lesssim&\mathop {\sup} \limits_{(\mu,v)\in X}\dfrac{\langle\cG'(\lambda,u)(\gamma,w),(\mu,P_hv)\rangle+\gamma \eta_a(V_h)\|u\|_0\|v\|_1}{\|(\mu,v)\|_X}\nonumber\\
 &\lesssim&\mathop {\sup} \limits_{(\mu,v)\in X}\dfrac{\langle\cG'(\lambda,u)(\gamma,w),(\mu,P_hv)\rangle}{\|(\mu,v)\|_X}+\eta_a(V_h)\|(\gamma,w)\|_X.
 \end{eqnarray}
Combing (\ref{projection}) and (\ref{isomorphism}) leads to
\begin{eqnarray*}
 \|(\gamma,w)\|_X &\lesssim&\mathop {\sup} \limits_{(\mu,v)\in X}\dfrac{\langle\cG'(\lambda,u)(\gamma,w),(\mu,P_hv)\rangle}{\|(\mu,v)\|_X}\\
 &\lesssim&\mathop {\sup} \limits_{(\mu,v)\in X}
 \dfrac{\langle\cG'(\lambda,u)(\gamma,w),(\mu,P_hv)\rangle}{\|(\mu,P_hv)\|_X}\\
 &\lesssim&\mathop {\sup} \limits_{(\mu,v)\in X_h}
 \dfrac{\langle\cG'(\lambda,u)(\gamma,w),(\mu,v)\rangle}{\|(\mu,v)\|_X}.
 \end{eqnarray*}
Then we get the desired conclusion (\ref{coercive of g2}).

For the last inequality (\ref{coercive of g3}), we assume there exists
a sufficiently small constant $\varepsilon$ such that
$\|(\hat{\lambda}-\lambda,\hat{u}-u)\|_X\leq \varepsilon$. Then for any $(\gamma,w)\in X_h$
\begin{eqnarray*}
 \|(\gamma,w)\|_X &\lesssim&  \mathop {\sup} \limits_{(\mu,v)\in X_h}\dfrac{\langle\cG'(\lambda,u)(\gamma,w),(\mu,v)\rangle}{\|(\mu,v)\|_X}\\
 &\lesssim&\mathop {\sup} \limits_{(\mu,v)\in X_h}
 \dfrac{\langle\cG'(\hat{\lambda},\hat{u})(\gamma,w),(\mu,v)\rangle+
 \varepsilon \|(\gamma,w)\|_X\|(\mu,v)\|_X }{\|(\mu,v)\|_X}.
 \end{eqnarray*}
The desired result $(\ref{coercive of g3})$ then easily follows if $\varepsilon$ is sufficiently small.
\end{proof}
Applying Newton iteration method to GPE leads to a linearized problem,
the corresponding residual estimate can be derived from the following theorem.
\begin{theorem}\label{residual estiamte}
For the nonlinear operator $\cG$ and any $(\mu_h,v_h), (\mu,v)\in X$, we have
\begin{eqnarray}\label{error of Newton Iteration}
\langle \cG(\mu_h,v_h),(\sigma,\eta)\rangle&=&\langle\cG(\mu,v),(\sigma,\eta)\rangle
+\langle \cG'(\mu,v)(\mu_h-\mu,v_h-v),(\sigma,\eta)\rangle \nonumber\\
&&+R\big((\mu,v),(\mu_h,v_h),(\sigma,\eta)\big),\quad\quad\forall (\sigma,\eta)\in X
\end{eqnarray}
with $R\big((\mu,v),(\mu_h,v_h),(\sigma,\eta)\big)$ being the residual which can be estimated as follows:
\begin{eqnarray*}
| R\big((\mu,v),(\mu_h,v_h),(\sigma,\eta)\big)|
\lesssim \|(\mu-\mu_h,v-v_h)\|_X\|(\mu-\mu_h,v-v_h)\|_0\|(\sigma,\eta)\|_X.
\end{eqnarray*}
\end{theorem}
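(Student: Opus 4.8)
The plan is to exploit the fact that $\cG$ is a cubic polynomial map in its arguments, so that the expansion (\ref{error of Newton Iteration}) holds \emph{exactly} and defines $R$ as the collection of all terms of second and third order in the increment $(\mu_h-\mu, v_h-v)$. First I would set $(\delta\mu,\delta v):=(\mu_h-\mu,v_h-v)$ and substitute $\mu_h=\mu+\delta\mu$, $v_h=v+\delta v$ directly into the definition (\ref{definition of g}) of $\langle\cG(\mu_h,v_h),(\sigma,\eta)\rangle$. The linear terms $(\nabla w,\nabla\eta)$ and $(Ww,\eta)$ contribute no remainder; the nonlinear contributions come only from the cubic term $\zeta(w^3,\eta)$, the bilinear term $-\gamma(w,\eta)$, and the constraint term $-\tfrac12\sigma(w,w)$. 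Expanding each product and subtracting off $\langle\cG(\mu,v),(\sigma,\eta)\rangle+\langle\cG'(\mu,v)(\delta\mu,\delta v),(\sigma,\eta)\rangle$, using the explicit form (\ref{definition of g'}) of $\cG'$, I expect to obtain the closed expression
\begin{eqnarray*}
R = 3\zeta(v(\delta v)^2,\eta) + \zeta((\delta v)^3,\eta) - \delta\mu(\delta v,\eta) - \tfrac12\sigma(\delta v,\delta v).
\end{eqnarray*}
Each of these four terms is manifestly of order two or three in $(\delta\mu,\delta v)$, which is exactly the structure needed for a quadratic remainder bound.

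It then remains to estimate the four terms. The two algebraically simplest are the cross term and the constraint term: by Cauchy--Schwarz, $|\delta\mu(\delta v,\eta)|\le|\delta\mu|\,\|\delta v\|_0\,\|\eta\|_0$ and $\tfrac12|\sigma|\,\|\delta v\|_0^2\le\tfrac12|\sigma|\,\|\delta v\|_1\|\delta v\|_0$; since $|\delta\mu|\le\|(\delta\mu,\delta v)\|_X$, $\|\delta v\|_0\le\|(\delta\mu,\delta v)\|_0$, and $|\sigma|+\|\eta\|_1=\|(\sigma,\eta)\|_X$, both are bounded by $\|(\delta\mu,\delta v)\|_X\|(\delta\mu,\delta v)\|_0\|(\sigma,\eta)\|_X$ up to a constant.

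The main obstacle is the pair of cubic terms. Here I would first combine them using the algebraic identity $b^3-a^3-3a^2(b-a)=(b-a)^2(b+2a)$ with $a=v$, $b=v_h$, so that the two cubic contributions collapse to the single term $\zeta((\delta v)^2(v_h+2v),\eta)$. I would then apply H\"older's inequality with exponents $(6,2,6,6)$ to the four factors $\delta v$, $\delta v$, $v_h+2v$, $\eta$, placing exactly one copy of $\delta v$ in $L^2$ (to produce the required $\|\cdot\|_0$ factor) and the remaining factors in $L^6$, and invoke the Sobolev embedding $H_0^1(\Omega)\hookrightarrow L^6(\Omega)$, valid since $d\le 3$. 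This yields
\begin{eqnarray*}
|\zeta((\delta v)^2(v_h+2v),\eta)| \lesssim (\|v\|_1+\|v_h\|_1)\,\|\delta v\|_1\,\|\delta v\|_0\,\|\eta\|_1.
\end{eqnarray*}
The only subtlety is the prefactor $\|v\|_1+\|v_h\|_1$: it is absorbed into the hidden constant because in every application $v$ and $v_h$ are approximations of the ground state $u$ lying in a fixed bounded $H^1$-neighborhood, so this factor is uniformly bounded. Collecting the four estimates and bounding each factor by the corresponding composite norm $\|(\cdot,\cdot)\|_X$ or $\|(\cdot,\cdot)\|_0$ then gives the claimed residual bound.
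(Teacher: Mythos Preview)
Your argument is correct and follows essentially the same route as the paper: Taylor-expand $\cG$, isolate the second-order terms, and bound the cubic contribution by H\"older with exponents $(6,2,6,6)$ together with the Sobolev embedding $H_0^1\hookrightarrow L^6$ for $d\le 3$. The only cosmetic difference is that the paper writes the remainder in integral form via $\varphi(t)=\langle\cG((\mu,v)+t(\delta\mu,\delta v)),(\sigma,\eta)\rangle$ and bounds $\varphi''(t)$ pointwise, whereas you exploit that $\cG$ is a polynomial and expand directly to the closed form $R=3\zeta(v(\delta v)^2,\eta)+\zeta((\delta v)^3,\eta)-\delta\mu(\delta v,\eta)-\tfrac12\sigma\|\delta v\|_0^2$; the resulting estimates are identical, and your explicit remark that the factor $\|v\|_1+\|v_h\|_1$ must be absorbed into the constant is a point the paper leaves implicit.
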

\begin{proof}
Define
\begin{eqnarray}
\varphi(t)=\langle \cG((\mu,v)+t(\mu_h-\mu,v_h-v)),(\sigma,\eta)\rangle.
\end{eqnarray}
Then the derivative of $\varphi$ with respect to $t$ can be derived trivially.
\begin{eqnarray*}
\varphi'(t)&=&(\nabla (v_h-v),\nabla \eta)+(W(v_h-v),\eta)+3 \big(\zeta(v+t(v_h-v))^2(v_h-v),\eta\big)\\
&&-(\mu_h-\mu)(v+t(v_h-v),\eta)-(\mu+t(\mu_h-\mu))(v_h-v,\eta)\\
&&-\sigma (v+t(v_h-v),v_h-v)\\
&=&\langle \cG'((\mu,v)+t(\mu_h-\mu,v_h-v))(\mu_h-\mu,v_h-v),(\sigma,\eta)\rangle
\end{eqnarray*}
and
\begin{eqnarray}\label{eta''}
\varphi''(t)&=&-2(\mu_h-\mu)(v_h-v,\eta)-\sigma (v_h-v,v_h-v) \nonumber\\
&& \ \ +6(\zeta(v+t(v_h-v))(v_h-v)^2,\eta).
\end{eqnarray}
Denote $\xi = v+t(v_h-v)$ and from the imbedding theorem, we have
$$\|\xi\|_{0,6}\lesssim \|\xi\|_1 \lesssim \|v\|_1+\|v_h\|_1.$$
For the last term of (\ref{eta''}),
\begin{eqnarray}
|(\xi(v_h-v)^2,\eta)| &\lesssim& \int_{\Omega}|\xi|(v_h-v)^2|\eta| dx \nonumber\\
&\lesssim& \|\xi\|_{0,6}\|v_h-v\|_{0}\|v_h-v\|_{0,6}\|\eta\|_{0,6} \nonumber\\
&\lesssim& \|\xi\|_1\|v_h-v\|_1\|v_h-v\|_0\|\eta\|_1.
\end{eqnarray}
Thus, (\ref{error of Newton Iteration}) can be derived from the following Taylor expansion
\begin{eqnarray}\label{talor expansion}
\varphi(1)=\varphi(0)+\varphi'(0)+\int_{0}^{1}\varphi''(t)(1-t)dt.
\end{eqnarray}
Due to (\ref{eta''})-(\ref{talor expansion}), the residual $R$ satisfies
\begin{eqnarray*}
| R\big((\mu,v),(\mu_h,v_h),(\sigma,\eta)\big)|
\lesssim \|(\mu-\mu_h,v-v_h)\|_X\|(\mu-\mu_h,v-v_h)\|_0\|(\sigma,\eta)\|_X.
\end{eqnarray*}
Then we complete the proof.
\end{proof}


\section{Multigrid algorithm based on Newton iteration method}
In this section, we propose a multigrid scheme based on Newton iteration method.
In this algorithm, we only need to solve a linearized mixed variational problem
on each refined finite element space.

\subsection{One Newton iteration step}
In order to design the multigrid method, we first introduce an one Newton iteration
step in this subsection. Assume we have obtained an eigenpair approximation
$(\lambda^{h_k},u^{h_k})\in \mathcal{R}\times V_{h_k}$, a type of iteration
step will be introduced to derive an eigenpair
$(\lambda^{h_{k+1}},u^{h_{k+1}})\in \mathcal{R}\times V_{h_{k+1}}$ with a  better accuracy.
In this paper, we denote by $(\lambda_{h_k},u_{h_k})$ the standard finite element solution of (\ref{GPEweakform}).
\begin{algorithm}\label{One Correction Step}
One Newton Iteration Step
\begin{enumerate}
\item Define the linearized mixed variational equation on the finite element space $X_{h_{k+1}}$ as follows:

Find $(\hat{\lambda}^{h_{k+1}},\hat{u}^{h_{k+1}})\in X_{h_{k+1}}$
such that for all $(\mu,v_{h_{k+1}})\in X_{h_{k+1}}$
\begin{eqnarray}\label{one correction bvp}
\begin{cases}
a(u^{h_{k}};\hat{u}^{h_{k+1}},v_{h_{k+1}})-\hat{\lambda}^{h_{k+1}}(u^{h_{k}},v_{h_{k+1}})
=(2\zeta(u^{h_{k}})^3-\lambda^{h_{k}}u^{h_{k}},v_{h_{k+1}}),  \\
-\mu (\hat{u}^{h_{k+1}}, u^{h_{k}})=-\mu/2-\mu(u^{h_{k}},u^{h_{k}})/2,
\end{cases}
\end{eqnarray}
where
$a(u^{h_{k}};\hat{u}^{h_{k+1}},v_{h_{k+1}})=a'(u^{h_{k}};\hat{u}^{h_{k+1}},
v_{h_{k+1}})-\lambda^{h_{k}}(\hat{u}^{h_{k+1}},v_{h_{k+1}})$.
\item Solve equation $(\ref{one correction bvp})$ to obtain an eigenpair
approximation $(\lambda^{h_{k+1}},u^{h_{k+1}})$
satisfying $\|(\lambda^{h_{k+1}}-\hat{\lambda}^{h_{k+1}},u^{h_{k+1}}-\hat{u}^{h_{k+1}})\|_X\lesssim \eta_a(V_{h_{k+1}})\delta_{h_{k+1}}(u) $.
\end{enumerate}
In order to simplify the notation and summarize the above two steps, we define
\begin{eqnarray*}
(\lambda^{h_{k+1}},u^{h_{k+1}})=Newton_{-}Iteration(\lambda^{h_k}, u^{h_k},V_{h_{k+1}}).
\end{eqnarray*}
\end{algorithm}

\begin{theorem}\label{One Correction Step estimate}
After implementing Algorithm $\ref{One Correction Step}$, the resultant
eigenpair approximation $(\lambda^{h_{k+1}},u^{h_{k+1}})$ has the following error estimate
\begin{eqnarray}\label{estimate}
&&\|(\lambda^{h_{k+1}}-\lambda_{h_{k+1}},u^{h_{k+1}}-u_{h_{k+1}})\|_X\nonumber\\
&\lesssim& \eta_a(V_{h_{k+1}})\delta_{h_{k+1}}(u)+\delta_{h_{k}}(u)
\|(\lambda_{h_{k}}-\lambda^{h_{k}},u_{h_{k}}-u^{h_{k}})\|_X\nonumber\\
&&+\|(\lambda_{h_{k}}-\lambda^{h_{k}},u_{h_{k}}-u^{h_{k}})\|_X
\|(\lambda_{h_{k}}-\lambda^{h_{k}},u_{h_{k}}-u^{h_{k}})\|_0.
\end{eqnarray}
\end{theorem}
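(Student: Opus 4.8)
The plan is to compare the computed pair with the exact fine-grid finite element eigenpair $(\lambda_{h_{k+1}},u_{h_{k+1}})$ of (\ref{GPEfem}) by first peeling off the inexact-solve error from Step 2 of Algorithm \ref{One Correction Step}, and then controlling the remaining Newton defect through the discrete isomorphism of $\cG'$ together with the quadratic residual bound of Theorem \ref{residual estiamte}. First I would write
\[
\|(\lambda^{h_{k+1}}-\lambda_{h_{k+1}},u^{h_{k+1}}-u_{h_{k+1}})\|_X\le\|(\lambda^{h_{k+1}}-\hat\lambda^{h_{k+1}},u^{h_{k+1}}-\hat u^{h_{k+1}})\|_X+\|(\hat\lambda^{h_{k+1}}-\lambda_{h_{k+1}},\hat u^{h_{k+1}}-u_{h_{k+1}})\|_X.
\]
The first summand is $\lesssim\eta_a(V_{h_{k+1}})\delta_{h_{k+1}}(u)$ directly by Step 2, which supplies the leading term on the right-hand side, so the whole work reduces to the second summand, namely the distance between the exact Newton solution of (\ref{one correction bvp}) and the fine-grid finite element eigenpair.

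To estimate that distance I would invoke the residual identity of Theorem \ref{residual estiamte} with the two base points $(\mu,v)=(\lambda^{h_k},u^{h_k})$ and $(\mu_h,v_h)=(\lambda_{h_{k+1}},u_{h_{k+1}})$, tested against $(\sigma,\eta)\in X_{h_{k+1}}$. Since $(\lambda_{h_{k+1}},u_{h_{k+1}})$ solves (\ref{GPEfem}) we have $\langle\cG(\lambda_{h_{k+1}},u_{h_{k+1}}),(\sigma,\eta)\rangle=0$ on $X_{h_{k+1}}$, whereas the Newton step (\ref{one correction bvp}) reads exactly $\langle\cG'(\lambda^{h_k},u^{h_k})(\hat\lambda^{h_{k+1}}-\lambda^{h_k},\hat u^{h_{k+1}}-u^{h_k}),(\sigma,\eta)\rangle=-\langle\cG(\lambda^{h_k},u^{h_k}),(\sigma,\eta)\rangle$. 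Subtracting the two relations and using the linearity of $\cG'(\lambda^{h_k},u^{h_k})$ in its increment argument, the $\cG$-terms cancel and I am left with
\[
\langle\cG'(\lambda^{h_k},u^{h_k})(\hat\lambda^{h_{k+1}}-\lambda_{h_{k+1}},\hat u^{h_{k+1}}-u_{h_{k+1}}),(\sigma,\eta)\rangle=R\big((\lambda^{h_k},u^{h_k}),(\lambda_{h_{k+1}},u_{h_{k+1}}),(\sigma,\eta)\big).
\]
Because $(\lambda^{h_k},u^{h_k})$ is assumed close to $(\lambda,u)$, the discrete isomorphism estimate (\ref{coercive of g3}) applies with base point $(\lambda^{h_k},u^{h_k})$ and turns this into $\|(\hat\lambda^{h_{k+1}}-\lambda_{h_{k+1}},\hat u^{h_{k+1}}-u_{h_{k+1}})\|_X\lesssim\sup_{(\sigma,\eta)\in X_{h_{k+1}}}|R|/\|(\sigma,\eta)\|_X$; then Theorem \ref{residual estiamte} bounds the supremum by the product $\|(\lambda^{h_k}-\lambda_{h_{k+1}},u^{h_k}-u_{h_{k+1}})\|_X\,\|(\lambda^{h_k}-\lambda_{h_{k+1}},u^{h_k}-u_{h_{k+1}})\|_0$.

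Finally I would insert the coarse finite element solution $(\lambda_{h_k},u_{h_k})$ and the exact $(\lambda,u)$ as intermediate points and apply Lemma \ref{lemma:Maday}: the consecutive finite element gap contributes $\lesssim\delta_{h_k}(u)$ in the $X$-norm and $\lesssim\eta_a(V_{h_k})\delta_{h_k}(u)$ in the weaker $\|\cdot\|_0$-norm (using $\delta_{h_{k+1}}\le\delta_{h_k}$ and $\eta_a(V_{h_{k+1}})\le\eta_a(V_{h_k})$). Hence the first residual factor is $\lesssim\delta_{h_k}(u)+\|(\lambda_{h_k}-\lambda^{h_k},u_{h_k}-u^{h_k})\|_X$ and the second is $\lesssim\eta_a(V_{h_k})\delta_{h_k}(u)+\|(\lambda_{h_k}-\lambda^{h_k},u_{h_k}-u^{h_k})\|_0$. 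Multiplying out and using $\|\cdot\|_0\le\|\cdot\|_X$ and $\eta_a\lesssim1$ should collapse the expansion into the three stated terms. I expect the main obstacle to be exactly the bookkeeping in this last expansion: the purely mesh-driven mixed contribution $\eta_a(V_{h_k})\delta_{h_k}(u)^2$ does not obviously fit under any of the three target terms, and absorbing it into the leading term $\eta_a(V_{h_{k+1}})\delta_{h_{k+1}}(u)$ relies on treating it as higher order together with the refinement relation between successive meshes; by contrast the cross terms $\delta_{h_k}(u)\|(\lambda_{h_k}-\lambda^{h_k},u_{h_k}-u^{h_k})\|_0$ and $\eta_a(V_{h_k})\delta_{h_k}(u)\|(\lambda_{h_k}-\lambda^{h_k},u_{h_k}-u^{h_k})\|_X$ are routinely dominated by $\delta_{h_k}(u)\|(\lambda_{h_k}-\lambda^{h_k},u_{h_k}-u^{h_k})\|_X$.
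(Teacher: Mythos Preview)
Your proposal is correct and follows essentially the same approach as the paper: triangle inequality to split off the Step~2 inexact-solve error, then use the Taylor identity of Theorem~\ref{residual estiamte} together with $\langle\cG(\lambda_{h_{k+1}},u_{h_{k+1}}),\cdot\rangle=0$ on $X_{h_{k+1}}$ and the Newton equation to obtain $\langle\cG'(\lambda^{h_k},u^{h_k})(\hat\lambda^{h_{k+1}}-\lambda_{h_{k+1}},\hat u^{h_{k+1}}-u_{h_{k+1}}),\cdot\rangle=\pm R$, apply the discrete inf--sup bound (\ref{coercive of g3}), and expand the resulting product via Lemma~\ref{lemma:Maday}. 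The paper carries out exactly these steps and, just as you anticipate, silently drops the term $\eta_a(V_{h_k})\delta_{h_k}^2(u)$ in the final line, implicitly using the refinement relation (\ref{delta_recur_relation}) to absorb it into $\eta_a(V_{h_{k+1}})\delta_{h_{k+1}}(u)$; your identification of this as the one delicate point is accurate.
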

\begin{proof}
For the standard finite element solution $(\lambda_{h_{{k+1}}},u_{h_{{k+1}}})$, we have
\begin{eqnarray}
\langle\cG(\lambda_{h_{{k+1}}},u_{h_{{k+1}}}),(\mu,v_{h_{{k+1}}})\rangle=0,
\quad \forall (\mu,v_{h_{{k+1}}}) \in X_{h_{k+1}}.
\end{eqnarray}
Together with Theorem \ref{residual estiamte} and Algorithm \ref{One Correction Step}, there holds
\begin{eqnarray*}
&&\langle \cG'(\lambda^{h_{k}},u^{h_{k}})(\lambda_{h_{k+1}}-\hat{\lambda}^{h_{k+1}},u_{h_{k+1}}
-\hat{u}^{h_{k+1}}),(\mu,v_{h_{k+1}})\rangle \\
&=& \langle \cG(\lambda^{h_{k}},u^{h_{k}}),(\mu,v_{h_{k+1}})\rangle\\
&&+\langle \cG'(\lambda^{h_{k}},u^{h_{k}})(\lambda_{h_{k+1}}
-\lambda^{h_{k}},u_{h_{k+1}}-u^{h_{k}}),(\mu,v_{h_{k+1}})\rangle \\
&=& \langle \cG(\lambda^{h_{k}},u^{h_{k}}),(\mu,v_{h_{k+1}})\rangle
-\langle\cG(\lambda_{h_{k+1}},u_{h_{k+1}}),(\mu,v_{h_{k+1}})\rangle\\
&&+\langle \cG'(\lambda^{h_{k}},u^{h_{k}})(\lambda_{h_{k+1}}
-\lambda^{h_{k}},u_{h_{k+1}}-u^{h_{k}}),(\mu,v_{h_{k+1}})\rangle \\
&=&-R\big((\lambda^{h_{k}},u^{h_{k}}),(\lambda_{h_{k+1}},u_{h_{k+1}}),(\mu,v_{h_{k+1}})\big).
\end{eqnarray*}
Using (\ref{coercive of g3}) and Theorem \ref{residual estiamte}, we derive
\begin{eqnarray}\label{coercive in Newtopn}
&&\|(\lambda_{h_{k+1}}-\hat{\lambda}^{h_{k+1}},u_{h_{k+1}}-\hat{u}^{h_{k+1}})\|_X\nonumber\\
&\lesssim& \|(\lambda_{h_{k+1}}-\lambda^{h_{k}},u_{h_{k+1}}-u^{h_{k}})\|_X\|(\lambda_{h_{k+1}}
-\lambda^{h_{k}},u_{h_{k+1}}-u^{h_{k}})\|_0\nonumber\\
&\lesssim& \eta_a(V_{h_{k}})\delta_{h_{k}}^2(u)+\delta_{h_{k}}(u)\|(\lambda_{h_{k}}
-\lambda^{h_{k}},u_{h_{k}}-u^{h_{k}})\|_X\nonumber\\
&&+\|(\lambda_{h_{k}}-\lambda^{h_{k}},u_{h_{k}}-u^{h_{k}})\|_X\|(\lambda_{h_{k}}
-\lambda^{h_{k}},u_{h_{k}}-u^{h_{k}})\|_0.
\end{eqnarray}
Since
\begin{eqnarray*}
\|(\lambda^{h_{k+1}}-\hat{\lambda}^{h_{k+1}},u^{h_{k+1}}-\hat{u}^{h_{k+1}})\|_X
\lesssim \eta_a(V_{h_{k+1}})\delta_{h_{k+1}}(u),
\end{eqnarray*}
we arrive at
\begin{eqnarray*}
&&\|(\lambda^{h_{k+1}}-\lambda_{h_{k+1}},u^{h_{k+1}}-u_{h_{k+1}})\|_X\\
&\lesssim& \eta_a(V_{h_{k+1}})\delta_{h_{k+1}}(u)+\delta_{h_{k}}(u)\|(\lambda_{h_{k}}
-\lambda^{h_{k}},u_{h_{k}}-u^{h_{k}})\|_X\\
&&+\|(\lambda_{h_{k}}-\lambda^{h_{k}},u_{h_{k}}-u^{h_{k}})\|_X\|(\lambda_{h_{k}}
-\lambda^{h_{k}},u_{h_{k}}-u^{h_{k}})\|_0..
\end{eqnarray*}
This completes the proof.
\end{proof}

\subsection{Multigrid method}
In order to do multigrid iteration, we define a sequence of triangulations
$\cT_{h_k}$ and $\cT_{h_{k+1}}$ is produced from $\cT_{h_k}$ via a regular
refinement (produce $\beta^d$ congruent elements) such that
\begin{eqnarray}\label{mesh_size_recur}
h_k\approx\frac{1}{\beta}h_{k-1},
\end{eqnarray}
where the integer $\beta$ denotes the refinement index and larger than $1$ (always equals $2$).
Based on the mesh sequence, we construct a sequence of linear finite element spaces satisfying
\begin{eqnarray}\label{mesh relationship}
V_{h_{1}}\subset V_{h_{2}}\subset \cdots \subset V_{h_n}\subset V
\end{eqnarray}
and assume the following relations of approximation errors hold
\begin{eqnarray}\label{delta_recur_relation}
\eta_a(V_{h_k})\approx\frac{1}{\beta}\eta_a(V_{h_{k-1}}),\quad
\delta_{h_k}(u)\approx\frac{1}{\beta}\delta_{h_{k-1}}(u),\ \ \ k=1,2,\cdots,n.
\end{eqnarray}
Obviously, the following relationship is also valid
 \begin{eqnarray}
 X_{h_{1}}\subset X_{h_{2}}\subset \cdots \subset X_{h_n}\subset X.
 \end{eqnarray}

The multigrid method based on one Newton iteration step is proposed in the following algorithm.

\begin{algorithm}\label{mg}
Multigrid Algorithm
\begin{enumerate}
\item Construct a series of nested finite element spaces $V_{h_1},V_{h_2},\cdots,V_{h_n}$
such that $(\ref{mesh relationship})$ and $(\ref{delta_recur_relation})$ hold.
\item Solve the GPE on the initial finite element space $X_{h_1}$:
Find $(\lambda^{h_1},u^{h_1})\in X_{h_1}$ such that
\begin{equation*}
(\nabla u^{h_1},\nabla v_{h_1})+(Wu^{h_1},v_{h_1})+(\zeta( u^{h_1})^3,v_{h_1})
=\lambda^{h_1}(u^{h_1},v_{h_1}), \quad \forall v\in V_{h_1}.
\end{equation*}
\item Do $k=1,\cdots,n-1$\\
Obtain a new eigenpair approximation $(\lambda^{h_{k+1}},u^{h_{k+1}})$ by a Newton iteration step
\begin{eqnarray}
(\lambda^{h_{k+1}},u^{h_{k+1}}) = Newton_{-}Iteration(\lambda^{h_k},u^{h_k},V_{h_{k+1}}).
\end{eqnarray}
End Do.
\end{enumerate}
\end{algorithm}
\begin{theorem}\label{mg estimate}
Assume the initial mesh size $h_1$ is sufficiently small, after implementing
Algorithm \ref{mg}, the resultant eigenpair
approximation $(\lambda^{h_n},u^{h_n})$ has the following error estimate
\begin{eqnarray}\label{estimate}
\|(\lambda_{h_n}-\lambda^{h_n},u_{h_n}-u^{h_n})\|_X &\lesssim& \eta_a(V_{h_n})\delta_{h_n}(u).
\end{eqnarray}
\end{theorem}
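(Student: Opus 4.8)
The plan is to argue by induction on the level index $k$, using the single-step estimate of Theorem \ref{One Correction Step estimate} together with the geometric decay relations (\ref{delta_recur_relation}). To streamline the bookkeeping I would abbreviate $e_k:=\|(\lambda_{h_k}-\lambda^{h_k},u_{h_k}-u^{h_k})\|_X$ for the error between the multigrid iterate and the exact discrete solution on level $k$, and record the elementary inequality $\|(\gamma,w)\|_0=|\gamma|+\|w\|_0\le|\gamma|+\|w\|_1=\|(\gamma,w)\|_X$, so that every $\|\cdot\|_0$-factor appearing in Theorem \ref{One Correction Step estimate} may be bounded by the corresponding $\|\cdot\|_X$-factor. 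The target is then the stronger claim $e_k\lesssim\eta_a(V_{h_k})\delta_{h_k}(u)$ for all $k$, of which the case $k=n$ is exactly the assertion.

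For the base case, Step 2 of Algorithm \ref{mg} solves the discrete GPE exactly on $X_{h_1}$, whence $(\lambda^{h_1},u^{h_1})=(\lambda_{h_1},u_{h_1})$ and $e_1=0$, so the bound holds trivially. For the inductive step I would insert the hypothesis $e_k\le C\,\eta_a(V_{h_k})\delta_{h_k}(u)$ into the one-step estimate, which after bounding the $\|\cdot\|_0$-factor by $e_k$ reads
\begin{eqnarray*}
e_{k+1} &\lesssim& \eta_a(V_{h_{k+1}})\delta_{h_{k+1}}(u)+\delta_{h_k}(u)\,e_k+e_k^2,
\end{eqnarray*}
and then rewrite each level-$k$ quantity in terms of level $k+1$ via $\eta_a(V_{h_k})\approx\beta\,\eta_a(V_{h_{k+1}})$ and $\delta_{h_k}(u)\approx\beta\,\delta_{h_{k+1}}(u)$ from (\ref{delta_recur_relation}). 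A short computation then shows that the second term is of size $\eta_a(V_{h_{k+1}})\delta_{h_{k+1}}(u)\times C\beta^3\delta_{h_{k+1}}(u)$ and the third of size $\eta_a(V_{h_{k+1}})\delta_{h_{k+1}}(u)\times C^2\beta^4\eta_a(V_{h_{k+1}})\delta_{h_{k+1}}(u)$; that is, both carry an extra factor that is small relative to the leading term $\eta_a(V_{h_{k+1}})\delta_{h_{k+1}}(u)$. Consequently $e_{k+1}\le\eta_a(V_{h_{k+1}})\delta_{h_{k+1}}(u)\,[\,C_0+C_0(\text{small})\,]$, and choosing $C=2C_0$ with $C_0$ the constant hidden in Theorem \ref{One Correction Step estimate} makes the bracket $\le C$ once the small factors are controlled, closing the step.

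The crux — and the only place where the smallness of the initial mesh $h_1$ is genuinely used — is guaranteeing that the induction constant $C$ does not deteriorate as $k$ runs up to $n$. Because the spaces are nested and each refinement shrinks the approximation quantities, one has $\delta_{h_{k+1}}(u)\le\delta_{h_1}(u)$ and $\eta_a(V_{h_{k+1}})\le\eta_a(V_{h_1})$ for every $k$, so the extra factors $C\beta^3\delta_{h_{k+1}}(u)$ and $C^2\beta^4\eta_a(V_{h_{k+1}})\delta_{h_{k+1}}(u)$ are all dominated by fixed small quantities as soon as $h_1$ is taken small enough. This renders the inequality $C_0[1+(\text{small})]\le 2C_0=C$ valid uniformly in $k$, so the single constant $C$ propagates through all $n-1$ applications and yields $e_n\lesssim\eta_a(V_{h_n})\delta_{h_n}(u)$. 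I expect the main obstacle to be precisely this uniformity: the single-step bound is applied $n-1$ times, and one must verify that the nonlinear contributions $\delta_{h_k}(u)\,e_k$ and $e_k^2$ are truly higher order relative to $\eta_a(V_{h_{k+1}})\delta_{h_{k+1}}(u)$, so that the accumulated constant stays bounded independently of $n$ rather than growing geometrically with the number of levels.
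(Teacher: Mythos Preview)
Your proposal is correct and follows essentially the same route as the paper: induction on the level index $k$, with base case $e_1=0$ from Step~2 of Algorithm~\ref{mg} and inductive step supplied by the one-step bound of Theorem~\ref{One Correction Step estimate} together with the scaling relations~(\ref{delta_recur_relation}). Your treatment is in fact more careful than the paper's in one respect: you make explicit why the hidden constant in the $\lesssim$ does not degrade over the $n-1$ applications (by bounding the higher-order factors uniformly via $\delta_{h_{k+1}}(u)\le\delta_{h_1}(u)$ and $\eta_a(V_{h_{k+1}})\le\eta_a(V_{h_1})$), whereas the paper leaves this implicit in the symbol $\lesssim$ and the blanket hypothesis that $h_1$ is sufficiently small.
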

\begin{proof}
From the second step of Algorithm \ref{mg}, 
we have
\begin{eqnarray}\label{initial mesh}
0=\|(\lambda_{h_1}-\lambda^{h_1},u_{h_1}-u^{h_1})\|_X &\lesssim& \eta_a(V_{h_1})\delta_{h_1}(u).
\end{eqnarray}
Using Theorem \ref{One Correction Step estimate}, we derive
\begin{eqnarray}\label{coercive in Newtopn}
&&\|(\lambda_{h_{n}}-\hat{\lambda}^{h_{n}},u_{h_{n}}-\hat{u}^{h_{n}})\|_X\nonumber\\
&\lesssim& \|(\lambda_{h_{n}}-\lambda^{h_{n-1}},u_{h_{n}}-u^{h_{n-1}})\|_X\|(\lambda_{h_{n}}
-\lambda^{h_{n-1}},u_{h_{n}}-u^{h_{n-1}})\|_0\nonumber\\
&\lesssim& \eta_a(V_{h_{n}})\delta_{h_{n}}(u)+\delta_{h_{n-1}}(u)\|(\lambda_{h_{n-1}}
-\lambda^{h_{n-1}},u_{h_{n-1}}-u^{h_{n-1}})\|_X\nonumber\\
&&\hskip-1cm+\|(\lambda_{h_{n-1}}-\lambda^{h_{n-1}},u_{h_{n-1}}-u^{h_{n-1}})\|_X\|(\lambda_{h_{n-1}
}-\lambda^{h_{n-1}},u_{h_{n-1}}-u^{h_{n-1}})\|_0.
\end{eqnarray}

Inequality  (\ref{initial mesh}) means that (\ref{estimate}) holds for the initial finite element space $X_{h_1}$.
Assume that (\ref{estimate}) is true for the space $V_{h_{n-1}}$, i.e.,
\begin{eqnarray}\label{estimate n-1}
 \|(\lambda_{h_{n-1}}-\lambda^{h_{n-1}},u_{h_{n-1}}-u^{h_{n-1}})\|_X \lesssim \eta_a(V_{h_{n-1}})\delta_{h_{n-1}}(u).
 \end{eqnarray}
Combining (\ref{coercive in Newtopn}) and (\ref{estimate n-1}) leads to
\begin{eqnarray*}
 \|(\lambda_{h_{n}}-\hat{\lambda}^{h_{n}},u_{h_{n}}-\hat{u}^{h_{n}})\|_X \lesssim  \eta_a(V_{h_n})\delta_{h_{n}}(u).
 \end{eqnarray*}
 Since
 \begin{eqnarray*}
 \|(\lambda^{h_{n}}-\hat{\lambda}^{h_{n}},u^{h_{n}}-\hat{u}^{h_{n}})\|_X
\lesssim \eta_a(V_{h_n})\delta_{h_{n}}(u),
 \end{eqnarray*}
 we arrive at
 \begin{eqnarray*}
 \|(\lambda^{h_{n}}-\lambda_{h_{n}},u^{h_{n}}-u_{h_{n}})\|_X
\lesssim \eta_a(V_{h_n})\delta_{h_{n}}(u).
 \end{eqnarray*}
This completes the proof.
\end{proof}

\begin{theorem}\label{event}
For Algorithm \ref{mg}, under the conditions of Thoerem \ref{mg estimate}, we have
\begin{eqnarray}
\|(\lambda-\lambda^{h_n},u-u^{h_n})\|_X &\lesssim& \delta_{h_n}(u),\\
\|(\lambda-\lambda^{h_n},u-u^{h_n})\|_0 &\lesssim& \eta_a(V_{h_n})\delta_{h_n}(u).
\end{eqnarray}
\end{theorem}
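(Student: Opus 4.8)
The plan is to derive both estimates by a single triangle inequality, inserting the standard finite element solution $(\lambda_{h_n},u_{h_n})$ on the finest space as an intermediary between the exact eigenpair $(\lambda,u)$ and the multigrid output $(\lambda^{h_n},u^{h_n})$. Concretely, I would write
\[
(\lambda-\lambda^{h_n},u-u^{h_n})=(\lambda-\lambda_{h_n},u-u_{h_n})+(\lambda_{h_n}-\lambda^{h_n},u_{h_n}-u^{h_n}).
\]
The second summand is already controlled in the $X$-norm by Theorem \ref{mg estimate}, which gives $\|(\lambda_{h_n}-\lambda^{h_n},u_{h_n}-u^{h_n})\|_X\lesssim \eta_a(V_{h_n})\delta_{h_n}(u)$, so the whole argument reduces to estimating the pure discretization error $(\lambda-\lambda_{h_n},u-u_{h_n})$ of the standard scheme, which is precisely the content of Lemma \ref{lemma:Maday}.

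For the $X$-norm estimate I would note that $\|(\lambda-\lambda_{h_n},u-u_{h_n})\|_X=|\lambda-\lambda_{h_n}|+\|u-u_{h_n}\|_1$, and apply Lemma \ref{lemma:Maday} to bound the first term by $\eta_a(V_{h_n})\delta_{h_n}(u)$ and the second by $\delta_{h_n}(u)$. Since $\eta_a(V_{h_n})\to 0$ as $h_n\to 0$, it is in particular bounded, so $\eta_a(V_{h_n})\delta_{h_n}(u)\lesssim \delta_{h_n}(u)$; the same remark downgrades the Theorem \ref{mg estimate} bound to $\lesssim \delta_{h_n}(u)$. Adding the two contributions yields the first claimed inequality.

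For the $0$-norm estimate the key elementary observation is that $\|(\gamma,w)\|_0=|\gamma|+\|w\|_0\le |\gamma|+\|w\|_1=\|(\gamma,w)\|_X$, so the multigrid error inherits its $X$-norm bound in the weaker norm: $\|(\lambda_{h_n}-\lambda^{h_n},u_{h_n}-u^{h_n})\|_0\le \|(\lambda_{h_n}-\lambda^{h_n},u_{h_n}-u^{h_n})\|_X\lesssim \eta_a(V_{h_n})\delta_{h_n}(u)$ directly from Theorem \ref{mg estimate}. For the discretization part, Lemma \ref{lemma:Maday} supplies both $\|u-u_{h_n}\|_0\lesssim \eta_a(V_{h_n})\delta_{h_n}(u)$ and $|\lambda-\lambda_{h_n}|\lesssim \eta_a(V_{h_n})\delta_{h_n}(u)$, so that $\|(\lambda-\lambda_{h_n},u-u_{h_n})\|_0\lesssim \eta_a(V_{h_n})\delta_{h_n}(u)$; summing the two pieces gives the second inequality.

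There is no genuinely hard step: the entire proof is a triangle inequality feeding on two results already established. The only points demanding care are bookkeeping ones. First, one must remember that the eigenvalue error $|\lambda-\lambda_{h_n}|$ enjoys the faster rate $\eta_a(V_{h_n})\delta_{h_n}(u)$ rather than merely $\delta_{h_n}(u)$, and then use the boundedness of $\eta_a(V_{h_n})$ to absorb it into the coarser $\delta_{h_n}(u)$ bound needed for the $X$-estimate. Second, one must recognize that the $0$-norm on $X$ is dominated by the $X$-norm, so the multigrid contribution need not be re-estimated and simply carries over from Theorem \ref{mg estimate}.
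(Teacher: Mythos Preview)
Your proposal is correct and matches the paper's own proof exactly: the paper simply states that the result follows from Lemma~\ref{lemma:Maday}, Theorem~\ref{mg estimate}, and the triangle inequality, which is precisely the decomposition and pair of ingredients you describe. Your additional bookkeeping remarks (that $\eta_a(V_{h_n})$ is bounded and that $\|\cdot\|_0\le\|\cdot\|_X$) are the routine details the paper leaves implicit.
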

\begin{proof}
Theorem \ref{event} can be derived from Lemma \ref{lemma:Maday},
 Theorem \ref{mg estimate} and triangle inequality.
\end{proof}


\section{Work estimate of multigrid algorithm}
In this section, the computational work of Algorithm \ref{mg} is presented to
show the efficiency of this multigrid scheme.
Denote the dimension of finite element space $X_{h_k}$ by $N_k$. Then we have
$$N_k \approx \beta^{d(k-n)}N_n,\ \ \ k=1,2,\cdots,n.$$

\begin{theorem}
Assume the work of GPE problem in the initial finite element space $X_{h_1}$
is $\cO(M_{h_1})$ and that of the linear boundary value
problem in each level $X_{h_k}$ is $\cO(N_k)$ for $k=1,2,\cdots,n$.
Then the work involved in the multigrid method is $\cO(N_n+M_{h_1})$.
Furthermore, the complexity can be $\cO(N_n)$ provided $M_{h_1}\leq N_n$.
\end{theorem}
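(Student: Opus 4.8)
The plan is to split the total cost of Algorithm \ref{mg} along its three steps and then to sum a geometric series. First I would observe that Step 2 solves the full nonlinear GPE on the coarsest space $X_{h_1}$, which by hypothesis costs $\cO(M_{h_1})$. All remaining work comes from the loop in Step 3: for each $k=1,\dots,n-1$ the routine $Newton_{-}Iteration(\lambda^{h_k},u^{h_k},V_{h_{k+1}})$ assembles and solves the linearized mixed problem (\ref{one correction bvp}) on $X_{h_{k+1}}$, which by hypothesis costs $\cO(N_{k+1})$. Hence the total work $W$ satisfies
\begin{eqnarray*}
W \lesssim M_{h_1} + \sum_{k=1}^{n-1} N_{k+1} = M_{h_1} + \sum_{k=2}^{n} N_k.
\end{eqnarray*}

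Next I would insert the scaling relation $N_k \approx \beta^{d(k-n)}N_n$ recorded just before the theorem statement. This turns the sum into a geometric progression,
\begin{eqnarray*}
\sum_{k=2}^{n} N_k \approx N_n \sum_{k=2}^{n} \beta^{d(k-n)} = N_n \sum_{j=0}^{n-2} \beta^{-dj}.
\end{eqnarray*}
The decisive point is that $\beta > 1$ and $d \geq 1$ force the ratio $\beta^{-d}$ to lie strictly in $(0,1)$, so this partial sum is bounded above by the convergent series $\sum_{j=0}^{\infty}\beta^{-dj} = (1-\beta^{-d})^{-1}$, a constant independent of $n$. Therefore $\sum_{k=2}^{n} N_k = \cO(N_n)$ uniformly in the number of levels, and combining this with the coarse-space cost yields $W = \cO(N_n + M_{h_1})$. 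The additional claim is then immediate: if $M_{h_1}\leq N_n$ one has $N_n + M_{h_1}\leq 2N_n$, so $\cO(N_n+M_{h_1})=\cO(N_n)$.

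As for difficulty, there is essentially no analytic obstacle here; the argument is a bookkeeping exercise built on the two stated per-level cost assumptions. The only place where care is needed, and the step I regard as the genuine hinge, is ensuring that the geometric sum is bounded by a constant independent of the finest level $n$. This is precisely where the hypotheses $\beta>1$ and the dimensional factor $\beta^{d}$ enter: had the refinement been too mild (for instance $\beta=1$), the series would fail to be summable and the optimal-complexity conclusion would collapse, so that assumption is what makes the result nontrivial.
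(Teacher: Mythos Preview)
Your argument is correct and follows essentially the same route as the paper: split the total cost into $M_{h_1}$ plus $\sum_{k=2}^{n}N_k$, substitute $N_k\approx\beta^{d(k-n)}N_n$, and bound the resulting geometric sum by a constant multiple of $N_n$. The paper's proof is terser but identical in content; your added remark that $\beta>1$ is what makes the geometric series summable uniformly in $n$ is the only extra detail, and it is appropriate.
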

\begin{proof}
Denote the work in the $k-th$ finite element space $X_{h_k}$ by $W_k$ and the total work by $W$.
Then
\begin{eqnarray*}
W &=& \sum_{k=1}^{n}W_k = \cO(M_{h_1}+\sum_{k=2}^{n}N_k) \\
&=& \cO( M_{h_1}+\sum_{k=2}^{n}\beta^{d(k-n)}N_n ) \\
&=& \cO(M_{h_1}+N_n).
\end{eqnarray*}
Then we derive the desired result and  $W=\cO(N_n)$ when $M_{h_1}\leq N_n$.
\end{proof}

\section{Numerical results}
In this section, two numerical examples are presented to illustrate the
efficiency of the multigrid scheme proposed in this
paper.

\subsection{Example 1}
In the first example, we use  Algorithm \ref{mg} to solve the following GPE:
Find $(\lambda,u)$ such that
\begin{equation}\label{nonlinear_pde}
\left\{
\begin{array}{rcl}
-\triangle u+Wu +\zeta|u|^2 u&=&\lambda u, \quad {\rm in} \  \Omega,\\
u&=&0, \ \  \quad {\rm on}\  \partial\Omega,\\
\int_{\Omega}u^2d\Omega&=&1,
\end{array}
\right.
\end{equation}
where $\Omega$ denotes the three dimensional domain $(0,1)^3$, $ \zeta=1$
and $W=x_1^2+x_2^2+x_3^2$.

The sequence of finite element spaces are constructed by linear element on a series of meshes
produced by regular refinement with $\beta=2$ (producing $\beta^3$ congruent subelements).
Since the exact solution is not known, an adequate accurate approximation is choosen as the exact solution
to investigate the convergence behavior. 
The optimal error estimates can be obtained from the numerical results
which are presented in Figure \ref{error of bec1}.

In order to show the efficiency of Algorithm \ref{mg},
we also provide the running time of Algorithm \ref{mg}.
Here, all schemes are running on the same machine PowerEdge R720 with the linux system hereafter.
The corresponding results are presented in Table \ref{table1} and Figure \ref{error of bec1},
which imply the efficiency and linear complexity of Algorithm \ref{mg}.
\begin{figure}[ht]
\centering
\includegraphics[width=7cm]{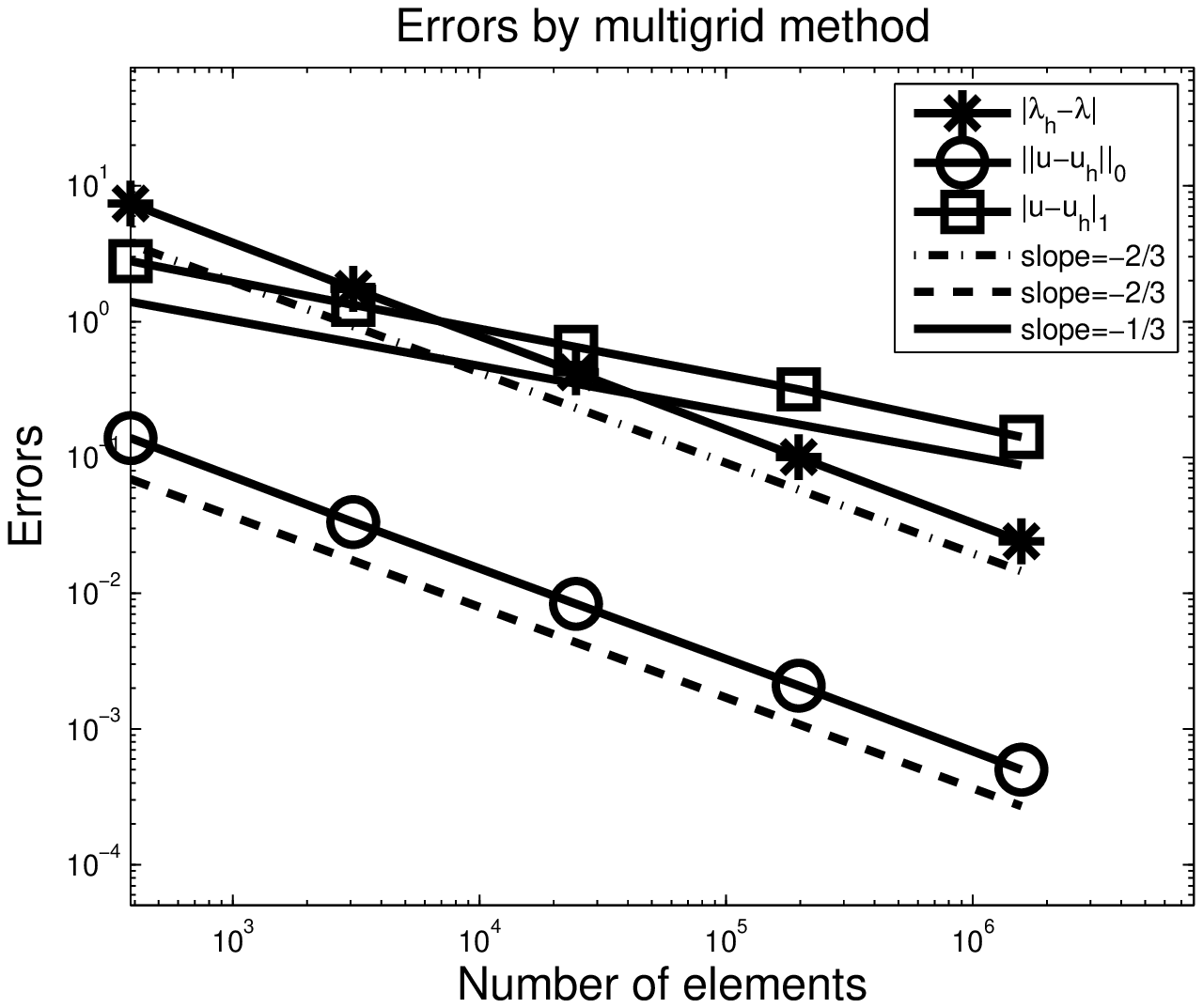}
\includegraphics[width=7cm]{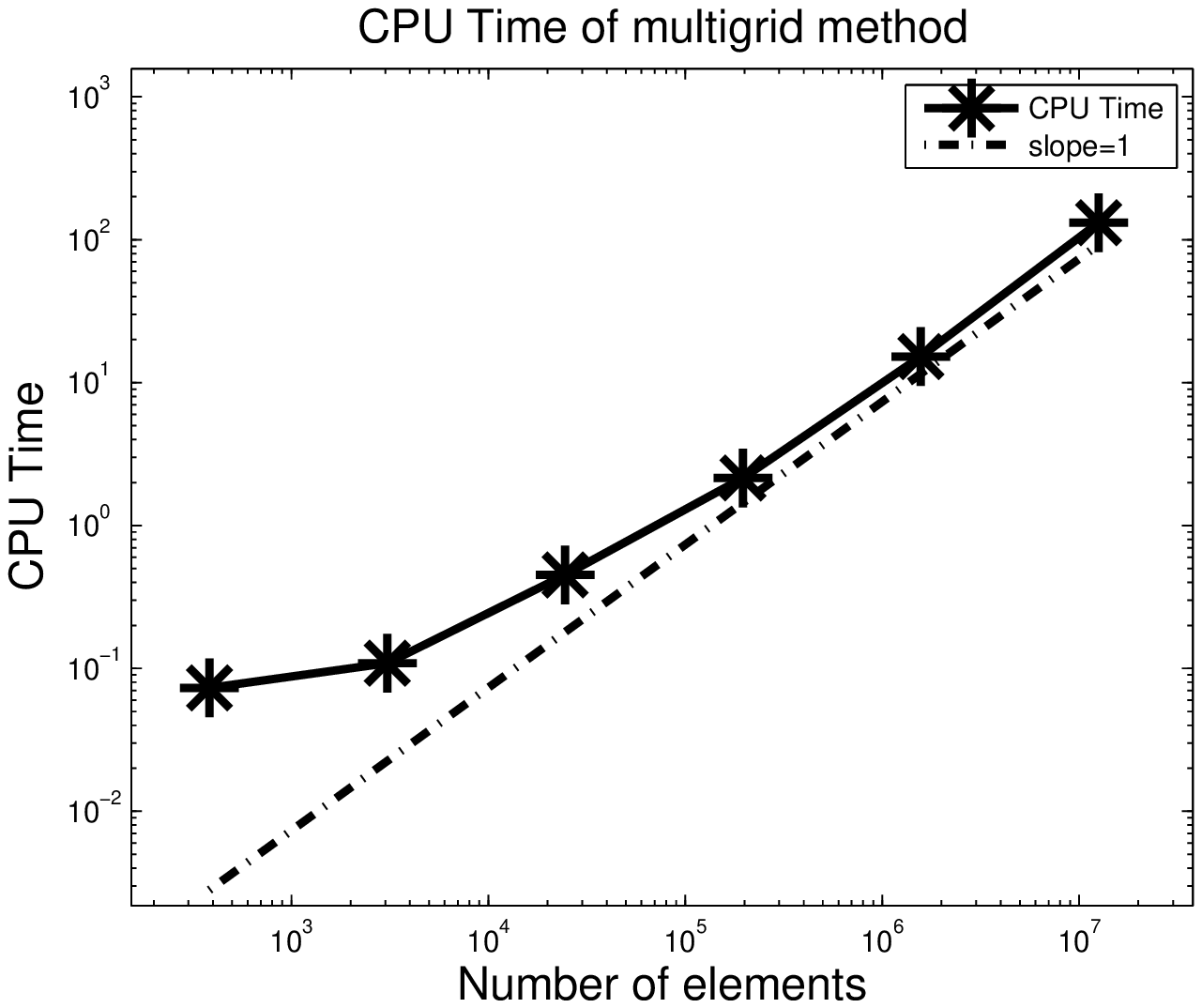}
\caption{\footnotesize \texttt Left: The errors of the multigrid method for the ground state solution of GPE,
where $\lambda_h$ and $u_h$ denote the numerical solutions of Algorithm \ref{mg}.
Right: CPU Time of Algorithm \ref{mg} for Example 1.}\label{error of bec1}
\end{figure}
\begin{table}[ht]
\begin{center}
\caption{\footnotesize The CPU time of Algorithm \ref{mg} for Example 1.}
\label{table1}
\begin{tabular}{|c|c|c|}\hline
Number of levels  & Number of elements & Time for Algorithm \ref{mg}\\
\hline
1 & 3072          & 0.1089             \\ \hline
2 & 24576         & 0.5249             \\ \hline
3 & 196608        & 2.1467             \\ \hline
4 & 1572846       & 15.7916            \\ \hline
5 & 12582912      & 131.3590           \\ \hline
\end{tabular}
\end{center}
\end{table}

\subsection{Example 2}
 In the second example, we consider
the GPE (\ref{nonlinear_pde}) on the domain $\Omega = (0,1)^3$ with the
coefficient $\zeta=100$ and $W=x_1^2+2x_2^2+4x_3^2$.
Numerical results are presented in Table \ref{table2} and Figure \ref{error of bec100}.
Hence the efficiency and linear complexity of Algorithm \ref{mg} can also be validated.
\begin{figure}[ht]
\centering
\includegraphics[width=7cm]{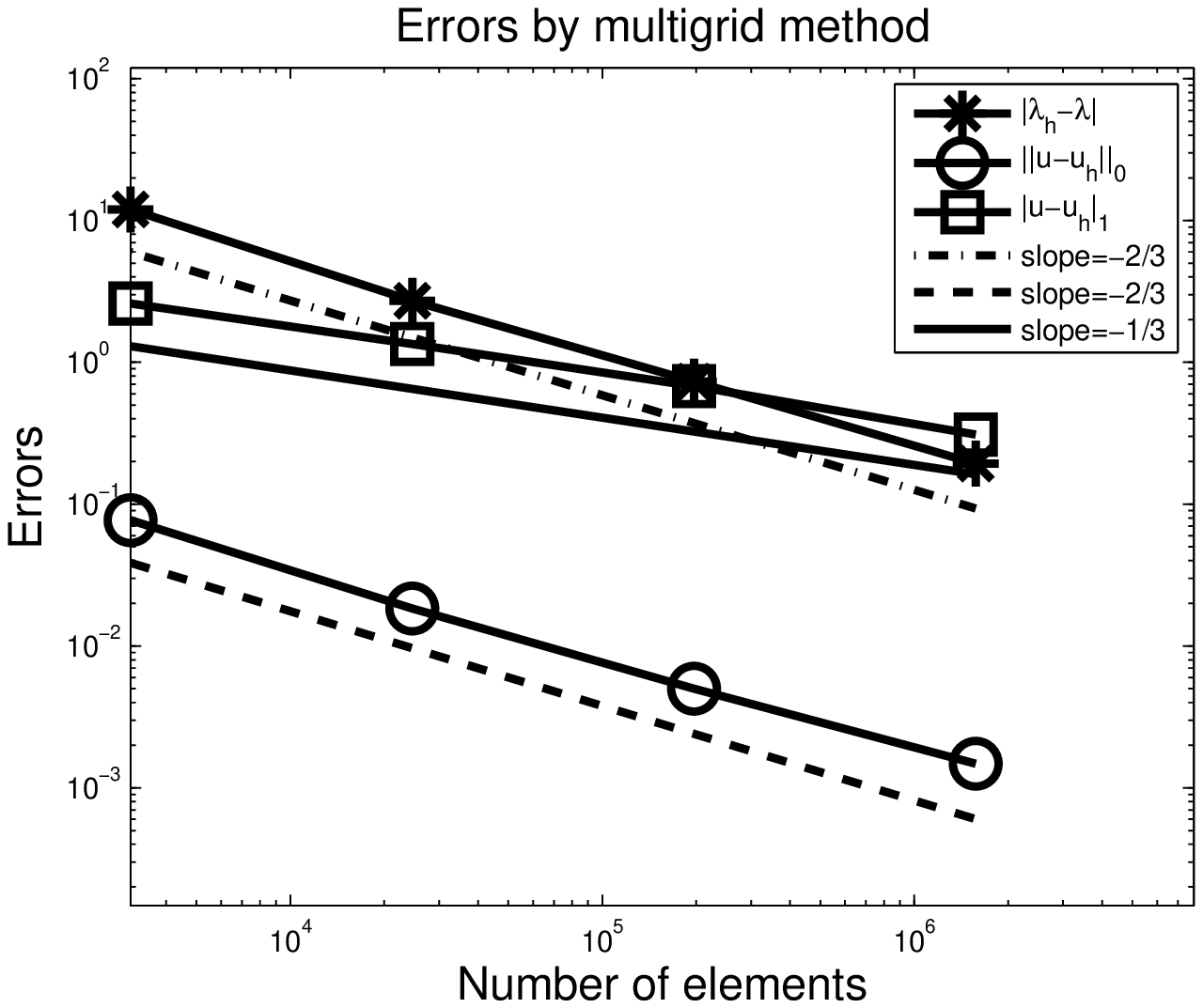}
\includegraphics[width=7cm]{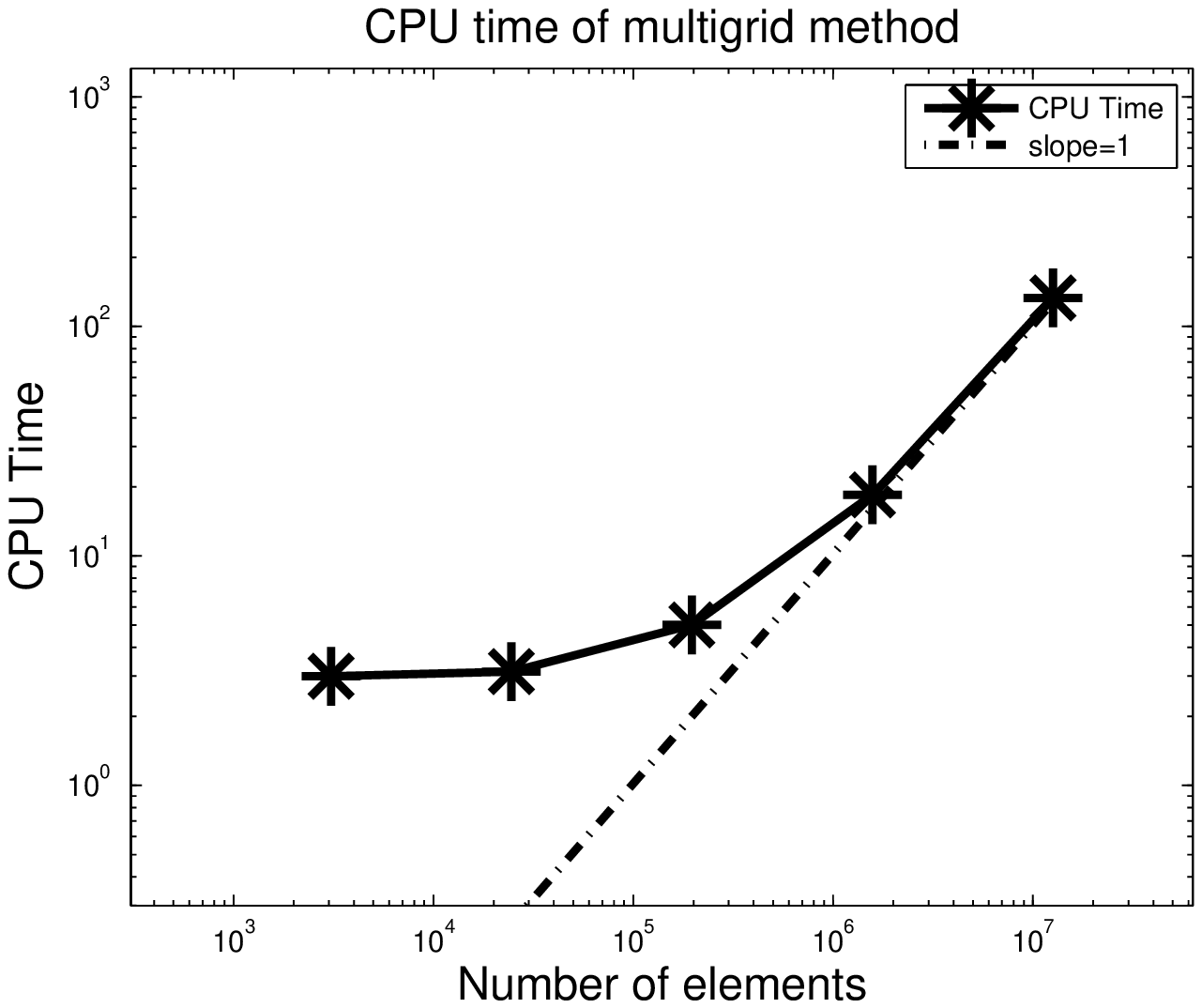}
\caption{\footnotesize Left: The errors of the multigrid method for the ground state solution of GPE,
where $\lambda_h$ and $u_h$ denote the numerical solutions of Algorithm \ref{mg}.
Right: CPU Time of Algorithm \ref{mg} for Example 2.}\label{error of bec100}
\end{figure}
\begin{table}[htp]
\begin{center}
\caption{\footnotesize The CPU time of Algorithm \ref{mg} for Example 2.}
\label{table2}
\begin{tabular}{|c|c|c|}\hline
Number of levels  & Number of elements & Time of Algorithm \ref{mg} \\ \hline
1 & 24576         & 2.4686              \\ \hline
2 & 196608        & 4.8973              \\ \hline
3 & 1572846       & 18.3522             \\ \hline
4 & 12582912      & 138.0450            \\ \hline
\end{tabular}
\end{center}
\end{table}

\section{Concluding remarks}
In this paper, we propose a type of multigrid method for GPE problems based on
Newton iteration. Different from the classical finite element method for
GPE problems, the proposed method transforms the nonlinear eigenvalue problem solving
to a series of linear boundary value problems solving and a eigenvalue
problem solving in the coarsest finite element space.
The high efficiency of linear boundary value problems solving can improve
 the overall efficiency of the simulation for BEC.
The corresponding analysis about the computational complexity has also been given.
The idea proposed here can also be extend to other nonlinear eigenvalue problems,
i.e., Kohn-Sham equation, which always arises from the electronic structure computation.


\end{document}